\theoremstyle{plain} 
\numberwithin{equation}{section}
\newtheorem{thm}[equation]{Theorem}
\newtheorem{lemma}[equation]{Lemma}
\newtheorem{cor}[equation]{Corollary}
\newtheorem{prop}[equation]{Proposition}
\newtheorem{defi}[equation]{Definition}
\theoremstyle{definition}
\newtheorem{rem}[equation]{Remark}
\theoremstyle{remark}
\def\CD{{\mathcal{D}}}
\def\CE{{\mathcal{E}}}
\def\CI{{\mathcal{I}}}
\def\CJ{{\mathcal{J}}}
\def\CK{{\mathcal{K}}}
\def\FJ{{\mathfrak{J}}}
\def\bZ{{\mathbb Z}}
\def\tI{\tilde{I}}
\def\tJ{\tilde{J}}
\def\Id{\operatorname{Id}\nolimits}
\def\Dim{\operatorname{Dim}\nolimits}
\def\Ann{\operatorname{Ann}\nolimits}
\def\stmod{\operatorname{{\bf stmod}}\nolimits}
\def\HHH{\operatorname{H}\nolimits}
\def\wHH{{\widehat{\HHH}}}
\def\Hom{\operatorname{Hom}\nolimits}
\def\PHom{\operatorname{PHom}\nolimits}
\def\End{\operatorname{End}\nolimits}
\def\HH#1#2#3{\HHH^{#1}(#2,#3)}
\def\Homul{\operatorname{\underline{Hom}}\nolimits}
\def\Ext{\operatorname{Ext}\nolimits}
\def\wExt{\widehat{\Ext}}
\def\hgs{\HH{*}{G}{k}}
\def\cemp{\CE_{+}}
\def\cemm{\widehat{\CE}_{-}}
\def\hcem{\widehat{\CE}}
\def\hhg{\HHH_G}
\def\ann{\operatorname{Ann}\nolimits}
\title[Nilpotence in Complete Cohomology]
{Nilpotence and Duality in the Complete Cohomology of a Module}
\author[Jon F. Carlson]{Jon F. Carlson}
\address{Department of Mathematics, University of Georgia, 
Athens, GA 30602, USA}
\email{jfc@math.uga.edu}
\thanks{Research partially supported by 
Simons Foundation grant 054813-01}
\subjclass{20C20 (primary), 20J06, 18G80}
\begin{document}

\begin{abstract}
Suppose that $G$ is a finite group and $k$ is a field of characteristic
$p>0$. We consider the complete cohomology ring $\CE_M^* = 
\sum_{n \in \bZ} \wExt^n_{kG}(M,M)$. We show that the ring has two 
distinguished ideals $I^* \subseteq J^* \subseteq \CE_M^*$
such that $I^*$ is bounded above in degrees, $\CE_M^*/J^*$ is bounded
below in degree and $J^*/I^*$ is eventually periodic with terms of 
bounded dimension. We prove that if $M$ is neither projective nor 
periodic, then the subring of all elements in 
negative degrees in $\CE_M^*$ is a nilpotent algebra.
\end{abstract}

\maketitle

\section{Introduction}
Let $G$ be a finite group and $k$ a field of characteristic $p >0$. 
In \cite{BC}, Dave Benson and this author defined products in the negative
cohomology $\wHH^*(G,k)$ and showed that products of elements in negative
degrees often vanish. For $G$ an elementary abelian $p$-groups, the 
product of any two elements with negative degrees is zero as well as
the product of any element in a positive degree with another in a 
negative degree. In general, for any group, the negative cohomology 
ring is a nilpotent $k$-algebra. This fact was not actually proved in 
\cite{BC}, though it follows directly from the statement about negative 
degree products in the cohomology of elementary abelian groups and
Theorem 2.5 of \cite{CQ}.
 
In this paper we extend some of the results of \cite{BC} to the complete
cohomology ring $\wExt_{kG}^*(M,M)$ of a finitely generated $kG$-module 
$M$. In particular, we prove that the $k$-algebra of elements in 
negative cohomology is nilpotent. In the course of the proof we introduce
two graded ideals $I^*$ and $J^*$, with $I^* \subseteq J^*$, whose membership 
for an element is determined by 
the polynomial rate of growth of the quotient of $\HHH^*(G,k)$ by the 
annihilator of the element. The quotient $J^*/I^*$ has bounded dimension
and is periodic in high degrees. There is another graded ideal $\FJ^*$
generated by elements in arbitrarily high negative degrees having the 
properties that $I^* \subseteq \FJ^* \subseteq J^*$ and that $\FJ^*/I^*$ 
is truly periodic. When $J^*/I^*$ is nontrivial, 
its annihilator determines a zero dimensional subvariety of 
the spectrum ${\rm Proj}(\HHH^*(G,k))$. Both it and the subvariety
corresponding to $\FJ^*/I^*$ are invariants of the module. 
The last section of the paper contains some examples of modules over
small elementary abelian groups, showing that the $J^*/I^*$ is not
trivial.

The cohomology ring $\Ext^*_{kG}(M, M)$ of a finitely generated module $M$,
is known to be a finitely generated algebra over its center and
finitely generated as a module over
$\HHH^*(G,k) \cong \Ext^*_{kG}(k,k)$ \cite{C1}. It is a PI algebra, but is
not graded commutative. Some of the constructions that we develop in this
paper,  in particular the nontriviality of the ideal $J^*$,
show up also in examples of the cohomology rings given in \cite{BC}.

The last section of the paper contains several examples for 
small elementary abelian subgroups, showing that the $J^*/I^*$ is not
trivial. 

The cohomology ring $\Ext^*_{kG}(M, M)$ of a finitely generated module $M$,
is known to be a finitely generated algebra over its center and 
as a module over 
$\HHH^*(G,k) \cong \Ext^*_{kG}(k,k)$ \cite{C1}. It is a PI algebra, but is
not graded commutative. Some of the constructions that we develop in this 
paper, in particular the nontriviality of the ideal $J^*$, 
show up also in the examples of the cohomology rings given in \cite{BC}. 

\section{Notation and definitions} \label{sec:products}
Here we recall and quickly sketch the definitions of the products in 
complete cohomology. The notation introduced here is also very useful
in the examples in Section \ref{sec:examples}.

Throughout the paper, $G$ is a finite group and $k$ is a field of 
characteristic $p >0$. All $kG$-modules are assumed to be
finitely generated. The stable category $\stmod(kG)$ is the category
whose objects are finitely generated $kG$-modules. If $M$ and $N$
are $kG$-modules, then the set of morphisms from $M$ to $N$ in the stable 
category is given as 
\[
\Homul_{kG}(M,N) \quad = \quad \Hom_{kG}(M,N) / \PHom_{kG}(M,N)
\]
where $\PHom_{kG}(M,N)$ is the subset of homomorphisms from $M$ to $N$
consisting of those that factor through projective modules. 

For $M$ a $kG$-module, let $P_* = P_*(M)$ be a complete projective resolution
of $M$. That is, $P_*$ is an acyclic complex (exact sequence) of projective 
modules: 
\[
\xymatrix{
\dots \ar[r] & P_2 \ar[r]^{\partial_2} & P_1 \ar[r]^{\partial_1} & 
P_0 \ar[r]^{\partial_0} & 
P_{-1} \ar[r]^{\partial_{-1}} & P_{-2} \ar[r] & \dots
}
\]
such that the image of $\partial_0:P_0 \to P_{-1}$ is isomorphic to $M$.
Let $\Omega^n(M)$ be the image of $\partial_n:P_n \to P_{n-1}$. The 
module $\Omega^n(M)$ depends on the projective resolution. However, its
class in the stable category is a well defined object. This means that it
is well defined up to isomorphism and direct sum with a projective module
in the module category. Note that the operator $\Omega^{-1}$ is the 
translation functor or shift functor on the stable category $\stmod(kG)$. 

For any $n$, and modules $M$ and $N$, the 
cohomology $\wExt^n_{kG}(M, N)$ 
is defined to be $\wHH^n(\Hom_{kG}(P_*(M), N) \cong \Homul_{kG}(\Omega^n(M), N)$.
We note that, by translation, $\Homul_{kG}(\Omega^n(M), N) \cong 
\Homul_{kG}(\Omega^{n+s}(M), \Omega^s(N))$.  Hence, the products in the 
cohomology can be defined as the composition 
\[
\xymatrix@-.6pc{
\wExt^n_{kG}(M, N) \otimes \wExt^m_{kG}(L, M) \ar[r] &
\Homul_{kG}(\Omega^n(M), N) \otimes \Homul_{kG}(\Omega^{n+m}(L), \Omega^n(M))
}
\]
\[
\xymatrix{
{} \ar[r] & \Homul_{kG}(\Omega^{n+m}(L), N) \ar[r] & \wExt^{m+n}_{kG}(L, N)
}
\]
But notice that any homomorphism $\psi: \Omega^n(M) \to N$, 
can be lifted to chain maps $\{\psi_j\}_{j \geq n}$ and 
$\{\psi_j\}_{j < n}$: 
\[
\xymatrix@-1pc{
\dots \ar[r] & P_{n+1}(M) \ar[r] \ar[d]^{\psi_{n+1}} & 
P_{n}(M) \ar[r] \ar[d]^{\psi_n} & 
\Omega^n(M) \ar[r] \ar[d]^{\psi} & 0 
&  0 \ar[r] & \Omega^n(M) \ar[r] \ar[d]^\psi & 
P_{n-1}(M) \ar[r] \ar[d]^{\psi_{n-1}} & \dots \\
\dots \ar[r] & P_{1}(N) \ar[r] & P_{0}(N) \ar[r] & N \ar[r] & 0, 
&  0 \ar[r] & N \ar[r] & P_{-1}(N) \ar[r] & \dots 
}
\]
This works because $kG$ is a self-injective ring. 
Hence, $\psi$ defines a chain map of degree $n$ from the complete
resolution of $M$ to that of $N$, and the chain map is well defined
up to homotopy. 

Thus, we conclude that $\wExt^n_{kG}(M, N)$ is isomorphic to the 
space of homotopy classes of chain maps of degree $n$ from a complete
projective resolution of $M$ to that of $N$. And, importantly, the 
product of two cohomology elements is the homotopy class of the 
composition of their corresponding chain maps. 


\section{The basic ideals}
Throughout the paper, we let $k$ be a field of characteristic $p > 0$ and 
let $G$ be a finite group. For convenience we assume that $k$ is algebraically
closed. We also assume that $p$ divides the order of $G$ as otherwise 
the results of this paper are vacuous.

We fix a finitely generated $kG$-module $M$. The purpose of this section
is to introduce two ideals in $\widehat{\Ext}^*_{kG}(M,M)$ that play an 
essential role in our study. But first some notation. 

For the sake of notational economy, 
let $\hhg^n = \HHH^n(G,k)$ and $\hhg^* = \HHH^*(G,k)$. Let
$\cemp^* = \Ext_{kG}^*(M, M)$ be the cohomology ring of $M$ in nonnegative
degrees and let $\hcem^* = \widehat{\Ext}_{kG}^*(M,M) =
\sum_{n \in \bZ} \widehat{\Ext}_{kG}^n(M,M)$ be the complete cohomology 
ring. Both $\cemp^*$ and $\hcem^*$ are modules over $\hhg^*$, and
$\hcem^*$ is a module over $\cemp^*$. Moreover, $\cemp^*$ is finitely
generated over $\hhg^*$, and the homomorphism $\hhg^* \to \cemp^*$ 
given by $\zeta \mapsto \zeta\Id_M$ sends $\hhg^*$ to the center 
(in the sense of graded commutative rings) of $\cemp^*$ (for example
see \cite[Lemma 2.6]{C1}).

\begin{defi} \label{def:I}
For any $n \in \bZ$, let $I^n$ be the $k$-subspace consisting of all 
$m \in \hcem^n$ such that $\cemp^*m$ has finite dimension in 
$\hcem^*$. Let $I^* = I_M^* = \sum_{n \in \bZ} I^n$. 
\end{defi}

\begin{lemma}
Let $\tI^n$ be the subspace of $I^n$ consisting of all 
$m \in \hcem^n$ such that $\hhg^*m$ has finite dimension. Let
$\tI^* =  \sum_{n \in \bZ} \tI^n$. Then $\tI^* = I^*$.
\end{lemma}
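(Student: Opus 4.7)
The plan is to establish both inclusions, with $I^* \subseteq \tI^*$ being immediate and the reverse containment carrying the real content of the lemma.

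First I would observe that the homomorphism $\hhg^* \to \cemp^*$, $\zeta \mapsto \zeta\Id_M$, produces an inclusion $\hhg^* m \subseteq \cemp^* m$ inside $\hcem^*$ for every $m \in \hcem^n$. Hence if $\cemp^*m$ has finite $k$-dimension, then so does $\hhg^*m$, and $I^n \subseteq \tI^n$ for all $n$.

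For the reverse, I would fix $m \in \tI^n$, so $\hhg^* m$ is finite-dimensional by hypothesis, and invoke the two structural facts recalled in the paragraph preceding the definition: $\cemp^*$ is finitely generated as a module over $\hhg^*$, and the image of $\hhg^*$ lies in the graded center of $\cemp^*$. Choosing homogeneous $\hhg^*$-module generators $e_1,\dots,e_r$ of $\cemp^*$, every element of $\cemp^*$ is a $k$-linear combination of products $\zeta e_i$ with $\zeta \in \hhg^*$, and centrality lets me move $\zeta$ past $e_i$ at the cost of a sign that is invisible to $k$-subspaces:
\[
\cemp^* m \;=\; \sum_{i=1}^r (\hhg^* e_i)\,m \;=\; \sum_{i=1}^r e_i\,(\hhg^* m).
\]
Each summand $e_i(\hhg^* m)$ is the image of the finite-dimensional space $\hhg^* m$ under left multiplication by $e_i$, hence finite-dimensional; a finite sum of finite-dimensional subspaces of $\hcem^*$ is again finite-dimensional. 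Therefore $\cemp^* m$ has finite $k$-dimension, $m \in I^n$, and $\tI^n \subseteq I^n$.

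The only subtlety worth flagging is the bookkeeping of the left and right actions of $\cemp^*$ on the bimodule $\hcem^*$, together with the graded signs introduced when moving elements of $\hhg^*$ past the $e_i$. Since signs are invisible at the level of $k$-subspaces and the two actions of $\hhg^*$ agree by centrality, I do not expect a genuine obstacle; the result is essentially a formal distillation of the finite-generation-plus-centrality statement quoted from \cite[Lemma 2.6]{C1}.
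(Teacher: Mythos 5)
Your proposal is correct and follows essentially the same argument as the paper: both directions hinge on the factorization of the $\hhg^*$-action through $\cemp^*$, finite generation of $\cemp^*$ over $\hhg^*$, and centrality of the image of $\hhg^*$, yielding $\cemp^* m = \sum_i e_i(\hhg^* m)$ in exactly the way the paper writes $\cemp^* m = \sum \hhg^*\mu_i m$. The only difference is that you spell out the rearrangement $\hhg^* e_i\, m = e_i\, \hhg^* m$ a bit more explicitly; there is no substantive divergence.
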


\begin{proof}
Because the action of $\hhg^*$ on $\hcem^*$ factors through the action 
on $\cemp$, it is clear that $I^* \subseteq \tI^*$. 
Now suppose that $m \in \tI^n$
for some $n$ and $\mu \in \cemp^t$ for some $t$. Then because the
action of $\hhg^*$ commutes with that of $\cemp^*$, $\mu m \in \tI^{n+t}$. 
There exist homogeneous elements $\mu_1, \dots, \mu_s$ such that 
$\cemp^* = \sum \hhg \mu_i$. Thus $\cemp^* m = \sum \hhg^* \mu_i m$
which has finite dimension. Hence, $m \in I^*$.
\end{proof}

\begin{prop} \label{prop:Iisideal}
The subspace $I^*$ is an ideal in $\hcem^*$.
\end{prop}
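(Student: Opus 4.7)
The plan is to verify left- and right-absorption separately, exploiting the two equivalent characterizations of $I^*$ afforded by the preceding lemma: an element $m \in \hcem^n$ lies in $I^n$ if and only if $\cemp^* m$ is finite-dimensional, if and only if $\hhg^* m$ is finite-dimensional. Fix $m \in I^n$ and $\alpha \in \hcem^t$; I want to show that both $m\alpha$ and $\alpha m$ lie in $I^{n+t}$.

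For right absorption, associativity immediately gives
\[
\cemp^*(m\alpha) \;=\; (\cemp^* m)\,\alpha,
\]
which is the image of the finite-dimensional subspace $\cemp^* m \subseteq \hcem^*$ under the $k$-linear map $x \mapsto x\alpha$. Hence $\cemp^*(m\alpha)$ is finite-dimensional, so $m\alpha \in I^{n+t}$ directly from Definition \ref{def:I}.

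For left absorption, the same manipulation fails since $\cemp^* \alpha$ need not lie inside $\cemp^*$. Here I would switch to the $\tI^*$ characterization. Because $\hhg^*$ maps to the graded center via $\zeta \mapsto \zeta\Id_M$, and because the product in $\hcem^*$ is realized as composition of chain maps of complete projective resolutions (Section \ref{sec:products}), each element $\zeta \Id_M$ with $\zeta \in \hhg^*$ commutes, up to the usual graded-commutative sign, with every $\alpha \in \hcem^*$. Granting this,
\[
\hhg^*(\alpha m) \;=\; (\hhg^* \alpha)\,m \;=\; \pm\,\alpha\,(\hhg^* m),
\]
the last space being the image of the finite-dimensional space $\hhg^* m$ under $x \mapsto \alpha x$. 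Thus $\hhg^*(\alpha m)$ is finite-dimensional, so $\alpha m \in \tI^{n+t} = I^{n+t}$ by the lemma.

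The one point requiring care, and which I expect to be the main (though minor) obstacle, is the extension of the stated centrality of $\hhg^*$ in $\cemp^*$ to the whole complete cohomology $\hcem^*$. The excerpt records this only for the nonnegative subring, but the chain-map description of products in Section \ref{sec:products} should make the graded-central action of $\zeta \Id_M$ on arbitrary (possibly negatively graded) Yoneda composites essentially formal: once one knows $\zeta \Id_M$ is realized by a chain map that commutes up to homotopy and sign with any other chain map, the identity $\zeta \cdot \alpha = \pm \alpha \cdot \zeta$ on $\hcem^*$ follows. With that in hand, the rest of the proof is bookkeeping.
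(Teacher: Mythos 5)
Your proof is correct and uses essentially the same approach as the paper: one direction of absorption is handled directly from $\mathcal{E}_{+}^{*}(m\alpha)=(\mathcal{E}_{+}^{*}m)\alpha$, and the other via the lemma $\tilde I^{*}=I^{*}$ together with the (graded-)centrality of the $\operatorname{H}^*(G,k)$-action, just as the paper does (the paper merely organizes the case split by degree rather than by side). The worry you flag about extending the centrality statement of \cite[Lemma~2.6]{C1} from $\mathcal{E}_{+}^{*}$ to all of $\widehat{\mathcal{E}}^{*}$ is a point the paper's own proof also relies on implicitly, so it is a shared (and standard) assumption rather than a gap in your argument.
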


\begin{proof}
It is not difficult to see that $I^*$ is closed under addition. In fact, 
$I^*$ can be characterized as the collection 
of elements $m$ in $\hcem^*$ having 
the property that  the $\cemp^*$-submodule generated by $m$ is finite
dimensional. That is, an element $m$ has this finite generation property
if and only if each of its homogeneous pieces has the property. In addition,
$I^*$ is easily seen to be closed under multiplication by elements of 
$\cemp^*$. The only thing remaining to show is that if $\mu \in \hcem^*$
has negative degree and $m$ is in $I^*$ then $\mu m$ is in $I^*$. However, 
the action of $\mu$ commutes with the action of $\hhg^*$. Hence, 
$\hhg^*\mu m= \mu \hhg^*m$ has finite dimension and is in $I^*$ by 
the last lemma. 
\end{proof}

The second ideal that we introduce is somewhat similar, but the construction
depends heavily on the fact that $\hhg^*$ is a finitely generated 
noetherian $k$-algebra. Thus we can use ideas and results from 
(graded-) commutative algebra.  
For an element $m$ in $\hcem^*$, let $\Ann(m)$ denote the annihilator of 
$m$ in $\hhg^*$. Note that if $m$ is a homogeneous element, then $\Ann(m)$ 
is a graded ideal. 

\begin{defi} \label{def:J}
For $n \in \bZ$, let $J^n$ be the $k$-subspace of $\hcem^n$ consisting of 
all elements $m$ with the property that $\hhg^*/\Ann(m)$ has Krull 
dimension at most one. This is equivalent
to the condition that there is some bound $B$, such that
for any $i$, $\hhg^i m$ has dimension at most $B$. Let 
$J^* = \sum_{n \in \bZ} J^m$.
\end{defi}

As with $I^*$, the subspace $J^*$ has a characterization in terms of the 
action of $\cemp^*$.

\begin{lemma}
Let $\tJ^n$ be the set of all elements $m$ in $\hcem^n$ such that 
there exists a number $B = B(m)$ with the property that for any 
$t>0$ the dimension of $\cemp^t m$ is at most $B$. 
Let $\tJ^* = \sum_{n \in \bZ} \tJ^n$.     Then $J^* = \tJ^*$. 
\end{lemma}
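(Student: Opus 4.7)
The plan is to mirror exactly the proof of the analogous lemma for $I^*$ (just above), which identified the action of $\cemp^*$ with that of $\hhg^*$ up to finite generation. Here, instead of finite-dimensionality we track a single uniform bound on the dimension of the homogeneous pieces, but the structural ingredients are the same: (i) $\cemp^*$ is finitely generated as a module over $\hhg^*$, and (ii) the image of $\hhg^*$ in $\cemp^*$ lies in the graded center.

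For the inclusion $\tJ^* \subseteq J^*$, let $m \in \tJ^n$ with bound $B$. Since the $\hhg^*$-action on $\hcem^*$ factors through $\zeta \mapsto \zeta \Id_M \in \cemp^*$, for every $i$ the space $\hhg^i m$ is contained in $\cemp^i m$, hence has dimension at most $B$. Therefore $m \in J^n$ with the same bound.

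For the inclusion $J^* \subseteq \tJ^*$, let $m \in J^n$ with bound $B$, and fix homogeneous generators $\mu_1,\dots,\mu_s$ of $\cemp^*$ as an $\hhg^*$-module, with $d_i = \deg \mu_i$. Then for each $t > 0$,
\[
\cemp^t m \;=\; \sum_{i\,:\,d_i \leq t} \hhg^{t-d_i}\mu_i\, m.
\]
Because $\zeta \Id_M$ belongs to the graded center of $\cemp^*$, we have (up to sign) $\zeta \mu_i m = \mu_i (\zeta m)$ for $\zeta \in \hhg^{t-d_i}$. Thus $\hhg^{t-d_i}\mu_i m = \mu_i(\hhg^{t-d_i} m)$, a subspace of dimension at most $\dim \hhg^{t-d_i} m \leq B$. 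Summing over $i$ yields $\dim \cemp^t m \leq sB$, independently of $t$, so $m \in \tJ^n$.

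There is no real obstacle here; the only point to verify carefully is the commutation $\zeta \mu_i = \pm \mu_i \zeta$ in $\cemp^*$, which follows from the centrality assertion recalled before Definition \ref{def:I} and ensures that multiplication by $\mu_i$ carries $\hhg^{t-d_i} m$ onto $\hhg^{t-d_i} \mu_i m$. The equivalence recorded in Definition \ref{def:J} between Krull dimension at most one and boundedness of $\dim \hhg^i m$ is not needed in the argument itself; once $J^* = \tJ^*$ is established, the lemma makes that characterization equally valid for the $\cemp^*$-action.
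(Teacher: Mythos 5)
Your proof is correct and follows essentially the same route as the paper's: both directions rest on the $\hhg^*$-module generators $\mu_1,\dots,\mu_s$ of $\cemp^*$ and on the fact that the image of $\hhg^*$ is (graded) central, giving $\cemp^t m = \sum_i \mu_i(\hhg^{t-d_i}m)$ and hence the uniform bound $sB$. You merely make the degree bookkeeping and the sign-commutation step explicit where the paper is terse.
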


\begin{proof}
From the definition we deduce that $\tJ^* \subseteq J^*$. For the 
reverse inclusion, suppose that $m \in J^n$ for some $n$. Then there 
exists $B$ such that the dimension of $\hhg^t m$ is at most $B$ for 
any $t$. Suppose that $\mu_1, \dots, \mu_s$ is a complete collection 
of generator for $\cemp^*$ as a module over $\hhg^*$. Then 
$\cemp^* = \sum \hhg^* \mu_i$ and we see that 
$\cemp^* m = \sum_{i=1}^s \mu_i(\hhg m)$. It follows that 
the dimension of $\cemp^t m$ is at most $sB$ for any $t$. Hence,
$m$ is in $\tJ^*$. 
\end{proof}

\begin{prop}\label{prop:Jisideal}
 The subspace $J^*$ is an ideal in $\hcem^*$.
\end{prop}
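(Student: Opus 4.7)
The plan is to mirror the proof of Proposition \ref{prop:Iisideal}, exploiting the two equivalent descriptions of $J^*$ (the $\hhg^*$-description from the definition and the $\cemp^*$-description from the preceding lemma) in the appropriate places.

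First, closure under addition is immediate from the $\hhg^*$-description: if $m_1, m_2 \in J^*$ with bounds $B_1, B_2$ on the dimensions of $\hhg^i m_j$, then $\dim(\hhg^i(m_1+m_2)) \leq B_1 + B_2$, so $m_1 + m_2 \in J^*$. (As with $I^*$, one notes that an element lies in $J^*$ iff each of its homogeneous components does.)

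Next, I would handle multiplication by elements of $\cemp^*$, using the $\cemp^*$-description. If $m \in J^n$ with $\dim \cemp^t m \leq B$ for all $t$, and $\mu \in \cemp^s$, then for any $\nu \in \cemp^t$ we have $\nu\mu \in \cemp^{t+s}$, so $\cemp^t(\mu m) \subseteq \cemp^{t+s} m$, which has dimension at most $B$. Hence $\mu m \in \tJ^* = J^*$.

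Finally, for closure under multiplication by a homogeneous $\mu \in \hcem^*$ of negative degree, the key point is that $\hhg^*$ maps into the center of $\hcem^*$ (since it maps into the center of $\cemp^*$ and the product in $\hcem^*$ is induced by composition of chain maps, which commutes with the action of $\hhg^*$). Therefore $\hhg^t(\mu m) = \mu(\hhg^t m)$, so $\dim \hhg^t(\mu m) \leq \dim \hhg^t m \leq B$, uniformly in $t$. By the definition of $J^*$, this shows $\mu m \in J^*$. The main (small) obstacle is simply making sure one uses the \emph{right} description of $J^*$ in each case: the $\cemp^*$-bounded description to multiply by elements of $\cemp^*$, and the $\hhg^*$-centrality to multiply by negative-degree elements of $\hcem^*$. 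Combined with the additive closure, this exhibits $J^*$ as a two-sided ideal in $\hcem^*$.
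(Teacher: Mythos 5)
Your proposal is correct and takes essentially the paper's approach, filling in the centrality argument behind the paper's terse ``we have seen'' step. One small simplification: the centrality of the image of $\hhg^*$ in $\hcem^*$ already gives $\hhg^t(\mu m) = \mu(\hhg^t m)$ for $\mu \in \hcem^*$ of arbitrary degree, so your separate first case for $\mu \in \cemp^*$ is redundant; and for completeness one should note the symmetric argument $\hhg^t(m\mu) = (\hhg^t m)\mu$ to get the two-sided claim.
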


\begin{proof}
The subspace $J^*$ is clearly closed under addition. Moreover, we have
seen that if $m \in J^n$ for some $n$ and $\mu \in \hcem^t$ for any $t$, 
then $\mu m$ is in $J^{m+t}$. 
\end{proof}

\begin{rem} \label{rem:periodic}
Suppose that the module $M$ is periodic. This means that $\Omega^n(M) 
\cong M \oplus P$ for some $n \neq 0$ and some projective module $P$.
It implies also that there exist $\zeta \in \hhg^n$ for 
some $n$ such that multiplication by $\zeta$
induces an isomorphism $\hcem^j \to \hcem^{j+m}$ for all $j$. Then 
provided $M$ is not projective, $J^* = \hcem^*$ and $I^* = \{0\}$. 
Conversely, if $J^* = \hcem^*$, then $M$ is periodic.
\end{rem}

\begin{rem} \label{rem:reg}
In the event that $\cemp^*$ has a regular element (as does $\HHH^*(G,k)$
\cite{Duf}), then $I^n = \{0\}$ for all $n \geq 0$.
\end{rem}


\section{Bounds on the ideals}
In this section, we show that the submodule $I^*$ is bounded above in 
degrees while the quotient $\hcem^*/J^*$ is bounded below. The $kG$-module
$M$ is fixed and we continue the same notation as in the previous section. 
The first statement is easy to prove. 

\begin{lemma} \label{lem:bdonI}
There exists a number $B = B_I$ such that $I^n = \{0\}$ for all $n > B$.
\end{lemma}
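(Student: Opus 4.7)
The plan is to reduce the assertion, via the identification of complete $\Ext$ with ordinary $\Ext$ in high positive degrees, to the standard fact that a finitely generated graded module over a Noetherian graded ring which is annihilated by a power of the irrelevant ideal must be finite-dimensional over $k$.

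First, I would recall that any complete projective resolution $P_*$ of $M$ may be chosen so that its nonnegative part is an ordinary projective resolution of $M$. Computing $\wExt^n_{kG}(M,M)$ directly from such a $P_*$, one sees that the canonical map $\cemp^n \to \hcem^n$ is an isomorphism for every $n \geq 1$; only in degree $0$ is there an additional quotient by maps factoring through a projective. It therefore suffices to produce a bound $B_I$ such that $I^n \subseteq \cemp^n$ vanishes for every $n > B_I$ with $n \geq 1$.

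Next, the preceding lemma lets one characterize $m \in I^n$ by the condition that $\hhg^* m$ is finite-dimensional over $k$. For $m \in \cemp^n$ with $n \geq 1$, all products $\zeta m$ with $\zeta \in \hhg^{>0}$ lie in $\cemp^{>n}$, and finite-dimensionality of $\hhg^* m$ forces $\hhg^i m = 0$ for all sufficiently large $i$; this is equivalent to $m$ being annihilated by some power of the irrelevant ideal $\fm := \hhg^{>0}$. Hence the positive-degree part of $I^*$ is contained in the $\fm$-torsion submodule
\[
 T \;:=\; \{\, x \in \cemp^* \mid \fm^N x = 0 \text{ for some } N \geq 0 \,\} \;\subseteq\; \cemp^*.
\]
Since $\hhg^*$ is Noetherian and $\cemp^*$ is finitely generated over $\hhg^*$ by \cite{C1}, the submodule $T$ is itself finitely generated over $\hhg^*$. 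Choosing finitely many homogeneous generators of $T$ and a single exponent $N$ annihilating each of them, $T$ is annihilated by $\fm^N$, and therefore is a finitely generated module over the finite-dimensional graded ring $\hhg^*/\fm^N$. Consequently $T$ has finite total $k$-dimension and is concentrated in only finitely many degrees.

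Taking $B_I$ to be the largest degree in which $T$ is nonzero (or $B_I := 0$ if $T = 0$) then gives $I^n = \{0\}$ for every $n > B_I$, as required. The only real subtlety is the identification $\cemp^n \cong \hcem^n$ for $n \geq 1$; once that is verified, the argument reduces to standard graded commutative algebra over $\hhg^*$. Note that no regularity hypothesis is needed on $\cemp^*$, in contrast with the preceding remark which disposes of nonnegative degrees only under the stronger assumption that $\cemp^*$ contains a regular element.
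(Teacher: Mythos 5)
Your proof is correct and takes essentially the same route as the paper: both reduce to the observation that the nonnegative part of $I^*$ is a finitely generated $\hhg^*$-module (by Noetherianity and finite generation of $\cemp^*$) in which each generator spans a finite-dimensional submodule, forcing the whole to be concentrated in boundedly many degrees. Your reformulation in terms of the $\fm$-torsion submodule $T$ and the explicit identification $\cemp^n \cong \hcem^n$ for $n \geq 1$ are just careful unpackings of what the paper uses implicitly.
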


\begin{proof}
For $n \geq 0$ let $U^n = I^n$ and $U^* = \sum_{n \geq 0} U^n = I^* \cap \cemp$. 
We know that $U^*$ is an ideal in $\cemp^*$ and is an $\hhg^*$-submodule
of $\cemp^*$. As such it is finitely generated, since $\hhg^*$ is noetherian. 
Because the submodule generated by each generator is finite dimensional,
there is an upper bound on the degrees of these submodules and a bound on the 
degrees of the whole of $U^*$.
\end{proof}

The proof for the bound on $\hcem^*/J^*$ requires a deeper analysis that
uses an idea from \cite{BC}. The bound we obtain is likely to be far from
optimal. 

\begin{thm} \label{thm:bound}
Let $\ell$ be the least common multiple of the degrees of a complete set 
of generators for the $k$-algebra $\hhg^*$. Let $d$ denote the maximum of 
the dimensions of $\Ext^j_{kG}(M,M)$ for $0 \leq j \leq \ell$.
Then for $n > d\ell$, we have that $\hcem^{-n} = J^{-n}$. 
\end{thm}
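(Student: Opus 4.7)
The goal is to show that every $m \in \hcem^{-n}$ with $n > d\ell$ lies in $J^{-n}$. Using the reformulation of Definition~\ref{def:J} given in the preceding Lemma, this amounts to producing a uniform bound $B$ (depending only on $d$ and $\ell$) with $\dim_k \hhg^i m \le B$ for every $i \ge 0$, or equivalently showing that $\hhg^*/\Ann(m)$ has Krull dimension at most $1$.

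The easy first step is a base bound on a length-$\ell$ window. For $i \in [n, n+\ell]$ the image $\hhg^i m$ lies in $\hcem^{i-n}$ with $0 \le i-n \le \ell$. Since $\hcem^j = \cemp^j$ for $j \ge 1$ and $\hcem^0$ is a quotient of $\cemp^0 = \End_{kG}(M)$, the definition of $d$ gives $\dim_k \hhg^i m \le d$ throughout this window. In particular, the Hilbert function of the cyclic graded $\hhg^*$-module $\hhg^* m \cong \hhg^*/\Ann(m)$ is $\le d$ on every residue class modulo $\ell$ in the range $[0,\ell]$.

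The harder task is to propagate this bound to all $i \ge 0$. The plan is to exploit that the generators $\zeta_1, \dots, \zeta_r$ of $\hhg^*$ have degrees $d_k$ all dividing $\ell$, so the $(\ell/d_k)$-th powers $\eta_k := \zeta_k^{\ell/d_k}$ all lie in $\hhg^\ell$. By Hilbert--Serre, the Hilbert function of $\hhg^*/\Ann(m)$ is eventually quasi-polynomial of period dividing $\ell$, and Krull dimension $\le 1$ is equivalent to every residue-class polynomial being constant. A pigeonhole argument on the sequence $m, \eta_k m, \eta_k^2 m, \dots, \eta_k^d m$---all of which still live in strictly negative Tate cohomology because $d\ell < n$---should then force enough relations in $\Ann(m)$. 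Concretely, the $d+1$ subspaces $\hhg^{n-q\ell}\cdot \eta_k^q m \subseteq \hcem^0$ (for $q=0,1,\dots,d$) are all contained in a space of dimension $\le d$, so their ascending union stabilizes at some $q^{\ast} \le d$, yielding nontrivial annihilator relations propagated along the $\eta_k$-direction. Running this for every $k$ and combining with graded-commutativity of the $\hhg^*$-action cuts the Krull dimension down to $1$.

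The main obstacle is turning the pigeonhole into a clean Krull-dimension estimate: the naive inductive decomposition $\hhg^i m = \sum_k \zeta_k \cdot \hhg^{i-d_k} m$ only yields exponentially growing upper bounds, so one needs a sharper mechanism. I expect the rigorous argument to proceed either via a Castelnuovo--Mumford-style regularity estimate for cyclic $\hhg^*$-modules generated in a shifted degree---ensuring that the Hilbert function coincides with its quasi-polynomial from degree $n$ onward whenever $n > d\ell$---or via a direct support-variety argument bounding $\dim V(\Ann(m)) \le 1$. The constant $d\ell$ looks tight: $\ell+1$ consecutive bounded values exhaust one value per residue class, and the depth $d$ is precisely what rules out polynomial-growth residue behavior below degree~$n$.
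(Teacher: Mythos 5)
There is a genuine gap, and you acknowledge it yourself: you set up the right framing (reduce to showing $\hhg^*/\Ann(m)$ has Krull dimension at most one, and observe that $\dim_k \hhg^i m \le d$ for $i$ in the window $[n,n+\ell]$, since those images land in $\hcem^j$ with $0\le j\le\ell$), but your ``propagation'' mechanism does not close the argument. Working one $\eta_k$ at a time---tracking the descending chain $\hhg^{n-q\ell}\,\eta_k^q m$ and pigeonholing---can only ever detect stabilization along a single direction in $\hhg^*$, which is not enough to rule out Krull dimension $2$; you would in effect be trying to bound the Hilbert function by controlling one parameter while the growth could come from a transverse direction. The appeals to Hilbert--Serre, Castelnuovo--Mumford regularity, or support varieties are speculative placeholders, and none of them is the mechanism the proof actually uses.

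The step you are missing is the contrapositive together with a Noether-normalization choice of \emph{two} algebraically independent parameters in degree $\ell$. Assume $m\in\hcem^{-n}$ with $n>d\ell$ and $m\notin J^{-n}$, so $\hhg^*/\Ann(m)$ has Krull dimension at least $2$. Because $\hhg^\ell$ contains a homogeneous system of parameters (your own observation that $\ell$ is a common multiple of generator degrees is exactly what is needed here), one can choose $\zeta_1,\zeta_2\in\hhg^\ell$ with $k[\zeta_1,\zeta_2]$ a polynomial ring meeting $\Ann(m)$ trivially. Then the $u+1$ elements $\zeta_1^i\zeta_2^{u-i}m$, $0\le i\le u$, are linearly independent, so $\dim_k\hhg^{u\ell}m\ge u+1$. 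Writing $n=q\ell+r$ with $0\le r<\ell$, the hypothesis $n>d\ell$ forces $q\ge d$, and $\hhg^{(q+1)\ell}m$ lies inside $\hcem^{\ell-r}$ with $1\le \ell-r\le\ell$, a space of dimension at most $d$; yet it has dimension at least $q+2>d$. That contradiction finishes the proof with no regularity estimate needed. Your window bound is correct and is exactly where the contradiction is detected; what you lack is the linear lower bound on $\dim_k\hhg^{u\ell}m$ coming from two independent parameters, which is the whole content of the argument.
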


\begin{proof}
Suppose that $m \in \hcem^{-n}$ with $-n < -d\ell$ and $m$ not in $J^{-n}$. 
Then, $\hhg^*/\Ann(m)$ has Krull dimension at least two. As a consequence, 
there exist elements $\zeta_1$ and $\zeta_2$ in $\hhg^\ell$ such that 
subalgebra $A = k[\zeta_1, \zeta_2] \subseteq \hhg^*$ is a polynomial ring
with two generators and $A \cap \Ann(m) = \{0\}$. 
Hence, the collection of all 
elements $\zeta_1^i\zeta_2^jm$ are $k$-linearly 
independent for all $i,j \geq 0$. This means that, for any $u$, the subspace 
$\hhg^{u\ell}m$ has dimension at least $u+1$. 
Such a subspace is in 
$\hcem^{u\ell-n}$. 

So if we write $n = q\ell +r$ with $0 \leq r < \ell$, 
then $q \geq d$ and $\hhg^{(q+1)\ell} m \subseteq \cemp^{\ell-r}$ has dimension
at least $q+2 > d$. The contradiction proves the theorem. 
\end{proof}

For any number $b$, let $\CD_b^* = \sum_{n \geq b}
\hcem^n$. Then $\CD_b^*$ is a finitely generated module over both
$\cemp^*$ and $\hhg^*$. Let $\CJ^* = J^* \cap \CD_b^*$ and 
$\CI^* = I^* \cap \CD_b^*$. Thus, $\CJ^n = J^n$ if $n \geq b$ and $\CJ^n = 
\{0\}$ otherwise. It is similar for $\CI^n$. 
We may assume that $\CJ^*/\CI^* \neq 0$ as otherwise there is nothing
to prove. Let $\Ann_b$ be the annihilator
of $\CJ^*/\CI^*$ in $\hhg^*$. Then $\hhg^*/\Ann_b$ has Krull dimension one 
because of the finite generation.

Now choose an element $\zeta$ in $\hhg^s$ for some $s$ such that $\hhg^*/
\Ann_b$\ is a finitely generated module over $k[\zeta]$. That is, we want
that $\zeta$ is a one-element homogeneous system of parameters 
$\hhg^*/\Ann_b$. In particular, if $U$ is the ideal generated by
$\ann_b$ and $\zeta$, or by $\ann_b$ and any power of $\zeta$,  
then $\hhg^*/U$ has finite dimension. 

With this notation we can prove the following. 

\begin{prop} \label{prop:inject-z}
For any $n \geq b$, the map $J^n/I^n \to J^{n+s}/I^{n+s}$ induced by 
multiplication by $\zeta$ is injective. 
\end{prop}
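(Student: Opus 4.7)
The plan is to take $m\in J^n$ with $n\ge b$ and $\zeta m\in I^{n+s}$, pass to the image $\bar m$ of $m$ in the quotient module $\CJ^*/\CI^*$, show that the cyclic $\hhg^*$-submodule $\hhg^*\bar m$ has finite total $k$-dimension, and then use the degree-boundedness of $I^*$ from Lemma~\ref{lem:bdonI} to lift this back to the conclusion that $\hhg^* m$ itself is finite-dimensional. By the first lemma of Section~3, that last statement is exactly the condition $m\in I^n$, which gives $\bar m = 0$ in $J^n/I^n$ and hence injectivity.

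For the key step, note that $n\ge b$ and $n+s>b$, so both $m$ and $\zeta m$ lie in $\CD_b^*$ and their classes make sense in $\CJ^*/\CI^*$. The hypothesis $\zeta m\in I^{n+s}$ therefore gives $\zeta\bar m=0$, while $\Ann_b\cdot\bar m=0$ holds by the defining property of $\Ann_b$. Using graded-commutativity of $\hhg^*$, both of these annihilation relations propagate from $\bar m$ to every element of the cyclic module $\hhg^*\bar m$, so that $\hhg^*\bar m$ is a module over $\hhg^*/U$, where $U=(\Ann_b,\zeta)$ is the ideal described just before Proposition~\ref{prop:inject-z}. Since that quotient has finite $k$-dimension, so does $\hhg^*\bar m$.

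To finish I would consider the short exact sequence
\[
0\longrightarrow \hhg^*m\cap\CI^*\longrightarrow \hhg^*m\longrightarrow \hhg^*\bar m\longrightarrow 0,
\]
which reduces finite-dimensionality of $\hhg^*m$ to that of $\hhg^*m\cap\CI^*$. Because $m\in J^n$, each piece $\hhg^i m$ has dimension at most a fixed constant $B$; and because $I^*$ vanishes in degrees above $B_I$ by Lemma~\ref{lem:bdonI}, the intersection $\hhg^*m\cap\CI^*\subseteq \hhg^*m\cap I^*$ is concentrated in the finitely many degrees $n\le j\le B_I$ and so is finite-dimensional. Hence $\hhg^*m$ is finite-dimensional, which completes the proof. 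The point most in need of care is the propagation step: one must verify, using that $\Ann_b$ is a graded ideal and $\hhg^*$ is graded-commutative, that killing the generator $\bar m$ by $\zeta$ and by $\Ann_b$ really kills the whole cyclic submodule $\hhg^*\bar m$. Once that is in place, the final lifting via Lemma~\ref{lem:bdonI} is essentially bookkeeping.
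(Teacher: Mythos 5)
Your proof is correct and follows the same strategy as the paper's: use $\Ann_b$ and $\zeta$ to show that a finite-codimension ideal of $\hhg^*$ kills $m$ modulo $I^*$, and conclude $m\in I^n$. You are a bit more careful than the paper at the lifting step — the paper asserts that $m$ itself is annihilated by $\Ann_b$, whereas in fact $\Ann_b$ only annihilates the class $\bar m$ in $\CJ^*/\CI^*$, and your short exact sequence together with Lemma~\ref{lem:bdonI} (bounding $\hhg^*m\cap I^*$ to finitely many degrees, each finite-dimensional since $m\in J^n$) is exactly what is needed to fill that in.
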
 

\begin{proof}
Suppose that $m \in J^n$ has the property that $\zeta m \in I^n$. 
Because $\hhg^* \zeta m$ has finite dimension,  $\zeta^t$ annihilates
$\zeta m$ for some $t$. Thus $m$ is annihilated by $\zeta^{t+1}$ as 
well as by $\Ann_b$. Hence, by the choice of $\zeta$, $\hhg^* m$ has finite
dimension and $m \in I_n$. 
\end{proof}

Because the choice of $b$ is arbitrary, the following corollary is immediate.

\begin{cor}
for any $n$, $\Dim(J^n/I^n) \leq \Dim(J^{n+s}/I^{n+s}),$ where 
$s$ is the degree of $\zeta$. 
\end{cor}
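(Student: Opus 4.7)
The plan is essentially to observe that $b$ in the setup preceding Proposition \ref{prop:inject-z} was an arbitrary integer, so for any target $n$ I am free to take $b \le n$. First I would fix an arbitrary $n \in \bZ$ and set $b = n$. With this choice, $\CJ^n = J^n$ and $\CI^n = I^n$, and the annihilator $\Ann_b$ of $\CJ^*/\CI^*$ in $\hhg^*$ yields a quotient $\hhg^*/\Ann_b$ of Krull dimension one (assuming the quotient is nonzero, otherwise the claim is trivial). Then I would choose a homogeneous one-element system of parameters $\zeta \in \hhg^s$ for $\hhg^*/\Ann_b$ exactly as done in the preparation for Proposition \ref{prop:inject-z}.

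Next, I would invoke Proposition \ref{prop:inject-z} directly: multiplication by $\zeta$ defines a $k$-linear map
\[
\cdot\zeta : J^n/I^n \longrightarrow J^{n+s}/I^{n+s}
\]
which is injective for every degree $\ge b$, and in particular at degree $n$ itself. Since an injective $k$-linear map between vector spaces forces an inequality of dimensions, I conclude $\Dim(J^n/I^n) \le \Dim(J^{n+s}/I^{n+s})$, which is precisely the statement of the corollary.

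There is no real obstacle beyond a bookkeeping subtlety: the element $\zeta$ and its degree $s$ a priori depend on the choice of $b$, and hence on $n$. So strictly speaking the corollary should be read as saying that for each $n$ there exists some positive integer $s$ (namely the degree of a parameter arising from this construction) with $\Dim(J^n/I^n) \le \Dim(J^{n+s}/I^{n+s})$. I would flag this dependence briefly to make sure the reader does not expect a single uniform $\zeta$ working in all degrees at once; beyond this clarification the argument is just the injection supplied by the previous proposition combined with the elementary fact that injections do not decrease dimension.
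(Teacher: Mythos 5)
Your argument reproduces exactly what the paper leaves implicit in its one-line justification ``because the choice of $b$ is arbitrary, the following corollary is immediate'': for each $n$ you take $b = n$, invoke Proposition~\ref{prop:inject-z} to get the injection $J^n/I^n \hookrightarrow J^{n+s}/I^{n+s}$, and read off the dimension inequality. That is the intended proof, so the approach matches.

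The caveat you flag --- that $\zeta$ and $s$ are a priori chosen relative to $b$ and therefore could vary with $n$ --- is a legitimate reading of the logical order in the paper, and you are right to note it. It is, however, resolvable without weakening the statement: since $\Ann_b \subseteq \Ann_n$ whenever $b \le n$, any one-element homogeneous system of parameters for $\hhg^*/\Ann_b$ is automatically one for $\hhg^*/\Ann_n$, so a single $\zeta$ (chosen for any fixed sufficiently small $b$, or, after the next corollary shows $\Ann_b$ is independent of $b$, for any $b$ at all) works uniformly. So the ``existential $s$'' reading you propose is safe but slightly weaker than necessary; the paper's $\zeta$ really can be taken fixed. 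Either way, your proof is correct and essentially the paper's.
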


In addition we note that the choice of the number $b$ is irrelevant. 

\begin{cor}
Suppose that $b$ and $c$ are any numbers. Then $\Ann_b = \Ann_c$ is 
the annihilator of $J^*/I^*$.
\end{cor}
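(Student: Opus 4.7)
The plan is to prove both inclusions $\Ann_b \subseteq \Ann_c$ and $\Ann_c \subseteq \Ann_b$ (assuming $b \leq c$ by symmetry) and then identify the common ideal with $\Ann(J^*/I^*)$. The inclusion $\Ann_b \subseteq \Ann_c$ is the easy direction: $b \leq c$ gives $\CD_c^* \subseteq \CD_b^*$, hence $\CJ_c^* \subseteq \CJ_b^*$ and $\CI_c^* \subseteq \CI_b^*$, and a homogeneous element of $\CJ_c^*$ lying in $\CI_b^*$ automatically lies in $I^* \cap \CD_c^* = \CI_c^*$. Thus the natural map $\CJ_c^*/\CI_c^* \to \CJ_b^*/\CI_b^*$ is injective, and the inclusion of annihilators follows.

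The substantive step is the reverse inclusion $\Ann_c \subseteq \Ann_b$. Take $\eta \in \Ann_c$ homogeneous of degree $t \geq 0$ and $m \in J^n$ homogeneous with $b \leq n < c$ (the case $n \geq c$ is immediate). Choose a homogeneous parameter $\zeta \in \hhg^s$ for $\hhg^*/\Ann_b$ and pick $k$ with $n + ks \geq c$. Since $J^*$ is an ideal in $\hcem^*$ by Proposition \ref{prop:Jisideal}, $\zeta^k m \in J^{n+ks} \subseteq \CJ_c^*$, so $\eta \zeta^k m \in \CI_c^* \subseteq I^*$. By graded-commutativity of $\hhg^*$, this rewrites as $\zeta^k \eta m \in I^*$. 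Now $\eta m \in J^{n+t}$ with $n+t \geq b$, so iterating Proposition \ref{prop:inject-z} $k$ times shows that multiplication by $\zeta^k$ is injective from $J^{n+t}/I^{n+t}$ to $J^{n+t+ks}/I^{n+t+ks}$, forcing $\eta m \in I^*$. This is the main obstacle of the proof: one must leverage information about $\eta$'s action in the high-degree range $\geq c$ to control it in the lower range $b \leq n < c$, and the parameter $\zeta$ plays a dual role -- multiplication by $\zeta^k$ pushes $m$ up into the range where $\eta$ is known to annihilate, while Proposition \ref{prop:inject-z} pulls the conclusion back down via injectivity.

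Finally, for the identification with $\Ann(J^*/I^*)$, the injection $\CJ_b^*/\CI_b^* \hookrightarrow J^*/I^*$ gives $\Ann(J^*/I^*) \subseteq \Ann_b$. Conversely, given $\eta \in \Ann_b$ and a homogeneous $m \in J^n$, choose $b' \leq \min(b,n)$; then $m \in \CJ_{b'}^*$, and the already-proved equality $\Ann_b = \Ann_{b'}$ yields $\eta m \in I^*$, so $\eta \in \Ann(J^*/I^*)$.
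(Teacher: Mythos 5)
Your proof is correct, and it follows the same basic mechanism as the paper's: push a low-degree element up with a power of the parameter $\zeta$ into the range where the hypothesis applies, land in $I^*$, and then pull the conclusion back down using the injectivity of Proposition~\ref{prop:inject-z}; the identification with $\Ann(J^*/I^*)$ at the end is handled the same way. The one place where you are noticeably more careful than the paper is in the choice of which annihilator $\zeta$ is a parameter for. You pick $\zeta$ as a one-element system of parameters for $\hhg^*/\Ann_b$ with $b = \min(b,c)$, so that the element $\eta m$ you need to pull down lives in degree $n+t \geq b$, which is exactly the range where Proposition~\ref{prop:inject-z} is established. The paper instead fixes $b > c$, keeps the $\zeta$ attached to the larger index $b$, takes $m$ of degree possibly as small as $c$, and then applies ``multiplication by $\zeta^n$ on $J^*/I^*$ is injective'' to $xm$, whose degree may well lie below $b$ --- outside the range that Proposition~\ref{prop:inject-z} actually covers. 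That step in the paper needs a small patch (e.g., replace $\zeta$ by a parameter for $\hhg^*/\Ann_c$, which also has Krull dimension at most one), and your arrangement of the indices avoids the issue cleanly. So: same route, but your write-up is tighter on the point that matters.
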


\begin{proof}
Suppose that $b > c$. Clearly, $\Ann_c \subseteq \Ann_b$, since 
$\CD_c \supseteq \CD_b$. So suppose that $x \in \Ann_b$ and 
$m \in \CD_c \cap J$. Then for $n$ sufficiently large $\zeta^nm
\in J \cap \CD_b$. So $\zeta^n xm = x\zeta^n m \in I^*$. Because 
multiplication by $\zeta^n$ on $J^*/I^*$ is injective, we must 
have that $xm \in I^*$. The fact that $\Ann_b$ is the annihilator 
of $J^*/I^*$ is clear from what we have proved.  
\end{proof} 


\section{duality}
In this section we recall the definition of Tate duality and reveal some 
of its consequences for the structure of cohomology rings. For background on 
Tate duality see one of the books \cite{Br} (see particularly Problem 4 
on page 148), \cite{CE} (XII.6) and the discussion in \cite{BC}. As before,
the module $M$ is fixed and we continue the notation of previous sections. 

For a $kG$-module $N$, let $N^* = \Hom_k(N, k)$ be its $k$-dual.  
Tate duality, as applied to $\hcem^*$, for an integer $n$, is a 
nondegenerate pairing 
\[
\wExt_{kG}^n(M,M) \otimes \wExt_{kG}^{-n-1}(M,M) \to k
\]
Using the adjointness and the standard isomorphism 
\[
\wExt_{kG}^n(M,M) \cong \wExt_{kG}^n(k,M^* \otimes M) \cong 
\wHH^n(G, \Hom_k(M, M))
\]
the duality is induced by the composition and is the usual cup 
product.  
\[
\xymatrix{
\wHH^n(G, \Hom_k(M, M)) \otimes \wHH^{-n-1}(G, \Hom_k(M, M))
\ar[r] & {} 
}
\]
\[
\xymatrix{
\wHH^{-1}(G, \Hom_k(M, M)) \ar[r] &  \wHH^{-1}(G, k) \cong  k
}
\]
where the first map is the product  and the second 
is induced by the trace map 
$\Hom_k(M,M) \to k$. The upshot of this is that if $\alpha$, $\beta$ and 
$\gamma$ are three homogeneous elements of $\wExt_{kG}^*(M,M)$ whose
degrees sum to $-1$, then 
\[
\langle \alpha\beta, \gamma \rangle \ = \ \langle \alpha, \beta\gamma 
\rangle.
\]
All of this leads us to the following.

\begin{prop} \label{prop:plusminus}
Suppose that $\alpha$ is a homogeneous element of $\hcem^*$ having 
negative degree. Then there exists an element $\beta\in \hcem$ 
with negative degree such that $\alpha\beta \neq 0$, if and only if
there exists an element $\gamma$ such that $\gamma\alpha \neq 0$
has positive degree. In either case we may assume that 
$\deg(\alpha) + \deg(\beta) + \deg(\gamma) = -1$.
\end{prop}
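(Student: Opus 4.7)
The plan is to combine the nondegeneracy of Tate duality with a cyclic symmetry of the triple pairing $\langle\alpha\beta,\gamma\rangle$. The associativity identity $\langle\alpha\beta,\gamma\rangle=\langle\alpha,\beta\gamma\rangle$ is already displayed just above the proposition; I would first upgrade this to the full cyclic identity
\[
\langle\alpha\beta,\gamma\rangle \;=\; \langle\gamma\alpha,\beta\rangle \;=\; \langle\beta\gamma,\alpha\rangle
\]
holding whenever $\deg\alpha+\deg\beta+\deg\gamma=-1$. For this it suffices to verify the 2-variable symmetry $\langle x,y\rangle=\langle y,x\rangle$. Under the identification $\wExt^n_{kG}(M,M)\cong\wHH^n(G,\Hom_k(M,M))$ the pairing is $\tr_*(x\cup y)$; swapping the cup-product factors introduces the graded sign $(-1)^{n(-n-1)}=(-1)^{n(n+1)}=+1$ and replaces the internal product in $\Hom_k(M,M)$ by the opposite one, and cyclicity of the trace $\tr(fg)=\tr(gf)$ then delivers the 2-variable symmetry. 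Composing this with the given associativity produces cyclicity.

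With cyclicity in hand, both directions are essentially formal consequences of Tate nondegeneracy. For the forward implication, assume $\alpha\beta\neq 0$ with $\deg\alpha,\deg\beta\leq -1$; nondegeneracy furnishes a $\gamma$ of degree $-\deg\alpha-\deg\beta-1\geq 1$ with $\langle\alpha\beta,\gamma\rangle\neq 0$, and cyclicity rewrites this as $\langle\gamma\alpha,\beta\rangle\neq 0$, forcing $\gamma\alpha\neq 0$. The three degrees sum to $-1$ by construction and $\gamma$ lies in strictly positive degree. The converse is the same argument run backwards: given $\gamma$ of positive degree with $\gamma\alpha\neq 0$ and with $\deg\gamma$ chosen so that $-\deg\gamma-\deg\alpha-1<0$ (the content of the positivity assertion about the product $\gamma\alpha$), nondegeneracy applied to the class $\gamma\alpha$ produces $\beta$ of this negative complementary degree with $\langle\gamma\alpha,\beta\rangle\neq 0$, and cyclicity returns $\langle\alpha\beta,\gamma\rangle\neq 0$, hence $\alpha\beta\neq 0$.

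The main technical step is the cyclic symmetry itself. Once the cup-product-and-trace description of Tate duality is unwound, the identity rests on cyclicity of the ordinary trace, but one must carefully track the graded signs arising from swapping cup-product factors with coefficients in the \emph{noncommutative} algebra $\Hom_k(M,M)$ and confirm that these signs cancel in total degree $-1$. Beyond that bookkeeping the proposition becomes a clean statement of the compatibility of Tate duality with the ring structure on $\hcem^*$.
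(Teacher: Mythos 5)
Your argument is correct and rests on the same two ingredients as the paper's: nondegeneracy of Tate duality and the identity $\langle\alpha\beta,\gamma\rangle=\langle\alpha,\beta\gamma\rangle$ displayed just before the proposition. The one genuine difference is that the paper sidesteps the two-variable symmetry you work to establish. Rather than choosing $\gamma$ so that $\langle\alpha\beta,\gamma\rangle\neq 0$ (with $\gamma$ in the right slot), and then invoking cyclicity to move $\gamma$ around, the paper uses nondegeneracy to pick $\gamma$ with $\langle\gamma,\alpha\beta\rangle\neq 0$ directly (placing $\gamma$ in the left slot); then a single application of the displayed identity with the substitution $(\alpha,\beta,\gamma)\mapsto(\gamma,\alpha,\beta)$ yields $\langle\gamma\alpha,\beta\rangle\neq 0$ at once. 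Your verification of the symmetry $\langle x,y\rangle=\langle y,x\rangle$ via the cyclicity of the trace together with the sign computation $(-1)^{n(n+1)}=+1$ is sound, so the detour does no harm; it is simply unnecessary, since the "associativity" form of the identity, read with the right variable assignment, already carries the full content one needs.
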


\begin{proof}
Suppose that there exists $\beta$ with $\deg(\beta) < 0$ such 
that $\alpha\beta \neq 0$. Then  there is an element $\gamma$ 
such that $\deg(\gamma) = -\deg(\alpha) - \deg(\beta) -1$ and 
$\langle \gamma, \alpha\beta \rangle \neq 0$.
Thus $\langle \gamma\alpha, \beta \rangle \neq 0$, and 
$\gamma\alpha \neq 0$. The reverse statement is proved by the
reverse argument. The statement about the degrees is obvious from
the construction.  
\end{proof}

\begin{cor} \label{cor:nilI1}
Let $B_I$ be the bound such that $I^n = 0$ if $n > B_I$ from Lemma
\ref{lem:bdonI}. Suppose that $\alpha \in I^n$ with $n < -B_I-1$. 
Then $\beta\alpha = 0$ for every element $\beta \in I^*$. 
\end{cor}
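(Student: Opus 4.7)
The plan is to argue by contradiction, using the Tate pairing identity recalled at the end of Section~\ref{sec:products}\dots well, at the end of the previous section. Fix $\alpha \in I^n$ with $n < -B_I - 1$, and suppose toward a contradiction that there is some homogeneous $\beta \in I^m$ with $\beta\alpha \neq 0$. Since the Tate pairing $\hcem^{n+m}\otimes \hcem^{-n-m-1}\to k$ is nondegenerate, I would pick a homogeneous $\gamma \in \hcem^{-n-m-1}$ with $\langle \gamma,\beta\alpha\rangle \neq 0$.

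The key step is then to apply the associativity identity $\langle \gamma\beta,\alpha\rangle = \langle \gamma,\beta\alpha\rangle$; this identity is valid because the degrees satisfy
\[
(-n-m-1) + m + n \;=\; -1.
\]
It follows that $\gamma\beta \neq 0$. Now $\gamma\beta$ is a homogeneous element of degree $-n-1$, and because $\beta \in I^*$ and $I^*$ is a two-sided ideal of $\hcem^*$ by Proposition~\ref{prop:Iisideal}, we have $\gamma\beta \in I^{-n-1}$. The hypothesis $n < -B_I - 1$ gives $-n-1 > B_I$, so Lemma~\ref{lem:bdonI} forces $I^{-n-1} = 0$ and hence $\gamma\beta = 0$, a contradiction.

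The main point to be careful about is simply the bookkeeping of degrees — one must move $\beta$ across the pairing (not $\gamma$), so that the resulting product $\gamma\beta$ lands on the positive side of the bound $B_I$ rather than on the negative side where $I^*$ has no a priori vanishing. No extra commutative-algebra input is needed beyond the two-sidedness of $I^*$ and the bound from Lemma~\ref{lem:bdonI}; the whole argument is a direct consequence of Tate duality together with the asymmetry that $I^*$ is bounded above but not below in degree.
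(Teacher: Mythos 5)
Your argument is correct and is essentially the same as the paper's: choose $\gamma$ by nondegeneracy of the Tate pairing so that $\langle\gamma,\beta\alpha\rangle \neq 0$, move $\beta$ across the pairing to get $\gamma\beta \neq 0$, then use that $I^*$ is a two-sided ideal to place $\gamma\beta$ in $I^{-n-1}$, which vanishes since $-n-1 > B_I$. The paper packages the first two steps as an appeal to Proposition~\ref{prop:plusminus} rather than writing out the pairing directly, but the content is identical.
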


\begin{proof}
Choose any homogeneous $\beta \in I$. Suppose that 
$\beta\alpha \neq 0$. Let $\gamma$ be as in the proposition. That is, 
$\gamma\beta \neq 0$ and $\deg(\gamma) = -\deg(\alpha) -\deg(\beta) -1.$
Then $\deg(\gamma\beta) = -\deg(\alpha) -1 > B_I$ implying that 
$\gamma\beta = 0$. The contradiction proves the corollary. 
\end{proof}

\begin{thm} \label{thm:nilpotentI}
The ideal $I^*$ is a nilpotent ideal in $\hcem^*$. 
\end{thm}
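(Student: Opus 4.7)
The plan is to use Corollary \ref{cor:nilI1} and its Tate-dual analogue to squeeze $(I^*)^n$ into a finite-dimensional subalgebra, and then to conclude by standard finite-dimensional algebra theory. First I would prove a right-handed version of \ref{cor:nilI1}: if $\beta\in I^m$ with $m<-B_I-1$ and $\alpha\in I^*$, then $\beta\alpha=0$. Assume $\beta\alpha\neq 0$; Tate duality produces $\gamma$ with $\langle\beta\alpha,\gamma\rangle\neq 0$ and $\deg\gamma=-\deg(\beta\alpha)-1$, and the associativity identity $\langle\beta\alpha,\gamma\rangle=\langle\beta,\alpha\gamma\rangle$ then gives $\alpha\gamma\neq 0$. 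But $\alpha\gamma$ lies in the ideal $I^*$ and has degree $-m-1>B_I$, contradicting Lemma \ref{lem:bdonI}. Now set $V=\bigoplus_{n<-B_I-1}I^n$ and $W=\bigoplus_{-B_I-1\le n\le B_I}I^n$, so that $I^*=V\oplus W$, $V\cdot I^*=I^*\cdot V=0$, and $W$ is finite-dimensional. Expanding $(I^*)^2=(V+W)^2$ and using the annihilation identities collapses it to $W^2$, and a routine induction gives $(I^*)^n=W^n$ for every $n\ge 2$, where $W^n$ denotes the span of $n$-fold products of elements of $W$ in $\hcem^*$. Since each $(I^*)^n$ is a two-sided ideal, $W^{n+1}\subseteq W^n$, and therefore the subalgebra $\langle W\rangle=W+W^2\subseteq\hcem^*$ generated by $W$ is finite-dimensional.

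Next I would verify that every homogeneous element $w\in\langle W\rangle$ is nilpotent. If $\deg w=d>0$, then $w^n$ lives in degree $nd>B_I$ for large $n$ and so vanishes by Lemma \ref{lem:bdonI}. If $d<0$, then for $n$ large enough $w^n$ lies in $V$ by degree, and then $w^{n+1}=w\cdot w^n\in W\cdot V\subseteq I^*\cdot V=0$. The delicate case is $d=0$: here $w$ lies in the two-sided ideal $I^0$ of the finite-dimensional algebra $\cemp^0=\Endb_{kG}(M)$. A nonzero idempotent $e\in I^0$ would lift to a nontrivial idempotent in $\End_{kG}(M)$, producing a nonzero direct summand $M_1=eM$ of $M$; but $\hhg^*e$ being finite-dimensional forces $\Ann_{\hhg^*}(M_1)$ to have finite codimension in $\hhg^*$, so $M_1$ has trivial cohomological support and is therefore projective, making $e=0$ in the stable category. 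Hence $I^0$ contains no nonzero idempotent, and in the Artinian algebra $\cemp^0$ it is therefore a nil (so nilpotent) ideal, so $w$ is nilpotent.

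Finally, pass to the unitization $\tilde R=k\oplus\langle W\rangle$, a finite-dimensional graded $k$-algebra whose Jacobson radical $J(\tilde R)$ is nilpotent (by the Artinian hypothesis) and graded (since $\tilde R$ is graded). Any graded ideal of $\tilde R$ whose homogeneous elements are all nilpotent must lie in $J(\tilde R)$ in this setting, so $\langle W\rangle\subseteq J(\tilde R)$ is nilpotent; thus $W^N=\langle W\rangle^N=0$ for some $N$, and consequently $(I^*)^N=W^N=0$. The main obstacle is the degree-zero step: identifying a hypothetical nonzero idempotent in $I^0$ with the projection onto a projective direct summand of $M$ is the one place where module-specific input about $M$ enters, in contrast to the purely formal degree considerations that handle nonzero degrees.
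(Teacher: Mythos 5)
Your argument is correct, but it is organized quite differently from the paper's. The paper keeps a running sequence of partial products $\mu_i=\gamma_2\cdots\gamma_i$; if none of them (nor $\gamma_1\mu_i$) vanish then Lemma~\ref{lem:bdonI} and Corollary~\ref{cor:nilI1} force every $\deg\mu_i$ into a window of width roughly $2B_I$, and a pigeonhole on degrees produces $N+1$ equal values, hence $N$ consecutive ``jumps'' of degree zero lying in the radical of $\hcem^0$; these multiply to zero because the radical of the local ring $\wExt^0_{kG}(M,M)$ is nilpotent (the paper silently assumes $M$ indecomposable here, since otherwise $\wExt^0$ is not local). You instead package the same degree bounds structurally: you prove the opposite-handed version of Corollary~\ref{cor:nilI1} (which is indeed missing from the paper and is needed for your two-sided annihilation), decompose $I^*=V\oplus W$ so that $V\cdot I^*=I^*\cdot V=0$, observe $(I^*)^n=W^n$, and reduce to showing that the finite-dimensional graded algebra $\langle W\rangle=W+W^2$ is nilpotent. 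The positive- and negative-degree homogeneous elements of $\langle W\rangle$ are nilpotent for the same degree reasons as in the paper; for degree zero you replace the indecomposability hypothesis with an idempotent argument, showing $I^0$ has no nonzero idempotent because a stable idempotent of finite $\hhg^*$-orbit would cut out a direct summand with trivial support variety, hence projective, hence zero in $\stmod(kG)$ --- this is the genuinely new ingredient in your version, and it removes the need for $M$ to be indecomposable (you should say explicitly that one may assume $M$ has no projective summands so that idempotents lift from $\Endb_{kG}(M)$ to $\End_{kG}(M)$). The last step, that a graded ideal of a finite-dimensional $\bZ$-graded algebra all of whose homogeneous elements are nilpotent lies in the Jacobson radical, is correct and uses the standard fact that the Jacobson radical of such an algebra is itself graded, so that the complementary central idempotent in $\tilde R/J(\tilde R)$ is necessarily homogeneous of degree zero. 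The trade-off: the paper's argument is shorter and yields an explicit nilpotence bound $2(N+1)(B_I+1)$, while yours is more structural, avoids the unstated indecomposability hypothesis, and isolates exactly where module-theoretic (as opposed to purely degree-theoretic) input enters.
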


\begin{proof}
Let $\gamma_1, \gamma_2, \dots$ be a sequence of homogeneous elements
in $I^*$. Our object is to show that for any $n$, sufficiently large,
the product $\gamma_1 \gamma_2 \cdots \gamma_n = 0$. For $i \geq 2$, 
let $a_i$ be 
the degree of $\mu_i = \gamma_2 \cdots \gamma_i$. Note that if for 
any $i$, $a_i > B_I$, then $\mu_i = 0$ by Lemma \ref{lem:bdonI}.
Likewise, if $a_i < -B_I-1$ then $\gamma_1\mu_{i} = 0$ by Corollary 
\ref{cor:nilI1}. Consequently, if $\gamma_1\mu_i \neq 0$ for all $i$ we 
must have that $B_I \leq a_i \leq -B_I-1$. 

The ring $\widehat{\cemp}^0 = \wExt^0_{kG}(M,M)$ is local, because $M$ is 
indecomposable. Let $N$ be the nilpotence degree of its radical. 
That is, any product of at least $N$ elements in the radical of 
$\widehat{\cemp}^0$ vanishes. We claim the the nilpotence degree of 
$I$ is at most $2(N+1)(B_I+1)$. For suppose that $n > 2(N+1)(B_I+1)$. Then
by the pigeonhole principle, at least $N+1$ 
of the $a_i$'s for $2 \leq i \leq n$
must be the same. That is, there exist $1 < i_0 < i_1 < \dots < i_N$ 
such that $a_{i_0} = a_{i_1} = \dots = a_N$. This means that 
the elements $\vartheta_j = \gamma_{i_j+1}\cdots \gamma_{i_{j+1}}$
all have degree $0$ and hence are elements in the radical of $\hcem^0$.
Consequently, $\vartheta_1 \cdots \vartheta_N = 0$ and also 
$\mu_n =0$. This proves the theorem. 
\end{proof}

\begin{thm} \label{thm:nilJ}
Suppose that $M$ is an indecomposable, nonprojective $kG$-module that
is not periodic. 
Let $\CK^* = \sum_{n<0} J^n$ be the negative cohomology that is in the ideal
$J^*$. Then $\CK^*$ is a nilpotent algebra.
\end{thm}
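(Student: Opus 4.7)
The plan is to adapt the strategy of Theorem~\ref{thm:nilpotentI}, reducing the nilpotence of $\CK^*$ to that of the already-nilpotent ideal $I^*$. Since $(I^*)^N = 0$ for some $N$, it suffices to show that the image $\bar\CK^*$ of $\CK^*$ in the quotient $\bar J^* = J^*/I^*$ is a nilpotent subalgebra: if $(\bar\CK^*)^M = 0$, then $(\CK^*)^{MN} = 0$.

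\textbf{Key lemma.} The main technical step is an analog of Corollary~\ref{cor:nilI1}: there exists a bound $B$ such that for every $\alpha \in J^n$ with $n < -B$ and every $\beta \in \CK^*$, the product $\alpha\beta$ lies in $I^*$. Granted this, the theorem follows quickly. In any product $\pi_k = \gamma_1 \cdots \gamma_k$ of homogeneous elements $\gamma_i \in \CK^*$, each $\deg \gamma_i \leq -1$, so $\deg \pi_{k-1} \leq -(k-1)$. Once $k > B+1$ we have $\pi_{k-1} \in J^n$ with $n < -B$; applying the lemma with $\alpha = \pi_{k-1}$, $\beta = \gamma_k$ puts $\pi_k \in I^*$, and $N$ further factors produce $\pi_{k+N} = 0$. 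Hence $(\CK^*)^{B+N+2} = 0$.

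\textbf{Strategy for the lemma.} Suppose for contradiction that $\alpha\beta \notin I^*$. In particular $\alpha\beta \neq 0$, so Proposition~\ref{prop:plusminus} supplies $\gamma \in \hcem^*$ of positive degree $-\deg\beta - 1 \geq 0$ with $\gamma\alpha \neq 0$ and nonzero underlying trace $\gamma\alpha\beta \in \hcem^{-1}$. Since $J^*$ is an ideal, $\gamma\alpha \in J^{-\deg\beta - 1}$ lies in the nonnegative-degree part of $\bar J^*$, which, by Theorem~\ref{thm:bound} together with the corollaries following Proposition~\ref{prop:inject-z}, has bounded graded dimension and is periodic under multiplication by a chosen parameter $\zeta \in \hhg^s$. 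Because $\deg \alpha$ is very negative, $\gamma$ is forced to have very large positive degree, and surjectivity of the $\zeta$-action in high degrees should allow a factorization $\gamma = \zeta^k \gamma'$ with $\deg \gamma'$ in a fixed bounded range. Commuting $\zeta^k$ past $\alpha$ via its centrality in $\hhg^*$ then reduces the problem to a finite-dimensional span in bounded degrees, where the hypothesis that $M$ is not periodic—equivalently $\hcem^*/J^* \neq 0$—should enforce annihilation modulo $I^*$.

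\textbf{Main obstacle.} Making the last reduction quantitative and uniform in $\beta$ is the hardest part. The bounded-dimension periodicity of $\bar J^{\geq 0}$ gives only finitely many ``types'' of pairings against $\beta$, and the non-periodicity of $M$ is what prevents $\gamma\alpha$ from filling out all of $\hcem^{-\deg\beta - 1}$, but converting this into a concrete estimate for $B$ is delicate. I expect the argument to require either a careful iteration of Proposition~\ref{prop:plusminus} (pushing the trace through successive factorizations) or a dimension-count in the Tate duality pairing on the localization $\bar J^*[\zeta^{-1}]$, which is finite-dimensional per period class and hence admits a purely combinatorial attack.
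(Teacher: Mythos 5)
Your high-level reduction---show that $\CK^*$ falls into the nilpotent ideal $I^*$ after finitely many factors---is the right frame, but the key lemma you propose is stronger than what the paper actually proves and is very likely false in general. You assert that there is a single bound $B$ with $\alpha\beta \in I^*$ whenever $\alpha\in J^{n}$, $n<-B$, and $\beta\in\CK^*$; this amounts to saying the negative part of $J^*/I^*$ squares to zero after applying enough $\zeta^{-1}$. But the ``periodic streak'' $\FJ^*/I^*$ from Section~\ref{sec:streak} is stable under multiplication by $\zeta^{-1}$, so if $\FJ^{0}/I^{0}$ contains any class $\vartheta$ with $\vartheta^2 \notin I^0$ (which is entirely possible when the radical of $\hcem^0 = \Homul_{kG}(M,M)$ has nilpotence degree $\geq 3$), then $\alpha = \zeta^{-a}\vartheta$ and $\beta = \zeta^{-1}\vartheta$ give $\alpha\beta = \zeta^{-a-1}\vartheta^2 \notin I^*$ for arbitrarily negative $\deg\alpha$. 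The nilpotence degree of the periodic streak is controlled by the radical length of the stable endomorphism ring, not by a degree bound, so a ``single multiplication lands in $I^*$'' lemma cannot work.

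The actual argument is different and hinges on an idea your proposal mentions only glancingly: non-periodicity of $M$ is used to force certain \emph{degree-zero} elements into the radical, not to produce a direct annihilation. Concretely, take $\zeta$ of degree $s$ as in Proposition~\ref{prop:inject-z}, apply the pigeonhole principle to the residues $a_i \bmod s$ of the partial-product degrees $a_i = \deg(\gamma_1\cdots\gamma_i)$, and assemble segments $\mu_j = \gamma_{i_j+1}\cdots\gamma_{i_{j+1}}$ whose degrees are negative multiples of $s$. Then $\vartheta_j = \zeta^{-s_j}\mu_j \in \hcem^0$, and the crux is that each $\vartheta_j$ factors through some $\Omega^t(M)$ with $t\neq 0$; because $M$ is \emph{not} periodic, $\Omega^t(M)\not\cong M$ stably, so $\vartheta_j$ is a non-unit, hence lies in the radical of the local ring $\hcem^0$. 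With $N$ the nilpotence degree of that radical, a product of $N$ such $\vartheta_j$ vanishes; pulling the $\zeta$-powers back out and using injectivity of $\zeta$ on $J^*/I^*$ (Proposition~\ref{prop:inject-z}) places $\gamma_1\cdots\gamma_{i_{N+1}}$ in $I^*$, and iterating plus Theorem~\ref{thm:nilpotentI} finishes. This pigeonhole-plus-radical mechanism, which is the genuine content of the proof and parallels the proof of Theorem~\ref{thm:nilpotentI}, is absent from your sketch; the Tate-duality route you outline as a replacement is acknowledged as incomplete and does not appear to close the gap.
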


\begin{proof}
Let $N$ denote the nilpotence degree of the radical of $\cemp^0 
= \Ext^0_{kG}(M,M).$ Let $\zeta\in \hhg^*$ be an element as in 
Proposition \ref{prop:inject-z}. Let $n = \deg(\zeta).$ Suppose that 
$\gamma_1, \gamma_2, \dots$ is a sequence of homogeneous elements
in $\CK^*$. Let $a_i = \deg(\gamma_1\cdots \gamma_i)$ and write 
$a_i = q_in +r_i$ where $0 \leq r_i < n$. Assume that 
$m \geq n(N+1)$.

By the pigeonhole principle there exist $i_1 < i_2 < \dots < i_{N+1}$ 
such that $r_{i_1} = \dots = r_{i_{N+1}}$. Let $\mu_j = \gamma_{i_j +1} \cdots 
\gamma_{i_{j+1}}$. Then $\deg(\mu_j) = s_jn$ for some $s_j < 0$. 
Also, $\vartheta_j = \zeta^{-s_j}\mu_j$ has degree $0$ 
and is an element of
$\hcem^0$. Because $M$ in neither projective nor periodic, $\vartheta_j$
factors through some $\Omega^t(M) \not\cong M$ for some $t$, and 
$\vartheta_j$ is in the radical of $\hcem^0$ for every $j$.
Let $s = \sum s_i$. Then 
\[
\zeta^s\gamma_1 \cdots \gamma_{i_{N+1}} = 
\gamma_1 \dots \gamma_{i_1} \vartheta_1 \dots \vartheta_N  \in I^*
\]
by Proposition \ref{prop:inject-z}. By Theorem \ref{thm:nilpotentI}
for $m$ sufficiently large, $\gamma_1 \dots \gamma_m = 0$.
\end{proof}

\begin{thm}  \label{thm:negcoho}
Suppose that $M$ is a $kG$-module that is neither projective nor periodic.
Let $\cemm^* = \sum_{n<0} \wExt^n_{kG}(M,M)$ be the algebra of negative 
cohomology of $M$. Then $\cemm^*$ is a nilpotent algebra. 
\end{thm}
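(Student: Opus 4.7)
The plan is to deduce the theorem directly from Theorem~\ref{thm:bound} and Theorem~\ref{thm:nilJ}. Theorem~\ref{thm:bound} tells us that every homogeneous element of $\hcem^*$ of sufficiently negative degree automatically lies in $J^*$, while Theorem~\ref{thm:nilJ} tells us that the negative part $\CK^* = J^* \cap \cemm^*$ is a nilpotent algebra. Hence a long enough product of elements of $\cemm^*$ will first drop into $\CK^*$ by a degree count, and then further long products will vanish by nilpotence of $\CK^*$.

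Concretely, let $d$ and $\ell$ be the constants from Theorem~\ref{thm:bound} and set $m_0 = d\ell + 1$. Given any sequence $\gamma_1, \gamma_2, \ldots$ of homogeneous elements of $\cemm^*$, each of degree at most $-1$, I would group them into consecutive blocks of length $m_0$ and form the block products
\[
\vartheta_b \;=\; \gamma_{(b-1)m_0+1}\,\gamma_{(b-1)m_0+2}\cdots \gamma_{b\,m_0}, \qquad b \geq 1.
\]
Each $\vartheta_b$ is homogeneous of degree at most $-m_0 < -d\ell$, so by Theorem~\ref{thm:bound} it lies in $J^*$, and since it has negative degree it in fact lies in $\CK^*$. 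Letting $N$ denote the nilpotence degree of $\CK^*$ furnished by Theorem~\ref{thm:nilJ}, one has $\vartheta_1\vartheta_2\cdots\vartheta_N = 0$, which rewritten is $\gamma_1\gamma_2\cdots\gamma_{N m_0} = 0$. Since the sequence was arbitrary, $\cemm^*$ is nilpotent of nilpotence degree at most $N m_0$.

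The main obstacle is the application of Theorem~\ref{thm:nilJ}, which is stated for indecomposable $M$, whereas here $M$ is only required to be nonprojective and nonperiodic. If $M$ is already indecomposable, nothing more is needed. In general one first applies Krull--Schmidt, writing $M = P \oplus \bigoplus_i M_i$ with $P$ projective and each $M_i$ indecomposable nonprojective; the hypothesis on $M$ forces at least one $M_i$ to be nonperiodic. The proof of Theorem~\ref{thm:nilJ} then adapts, the only extra point being that the degree-zero factors produced by the pigeonhole step must still be shown to lie in the Jacobson radical of $\hcem^0 = \underline{\End}_{kG}(M)$; that radical is nilpotent irrespective of locality since $\hcem^0$ is a finite dimensional $k$-algebra. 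This verification is the only substantive step beyond the short degree argument given above.
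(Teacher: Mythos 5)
Your main argument is the paper's reduction, made explicit: Theorem~\ref{thm:bound} guarantees that any product of $m_0 = d\ell+1$ homogeneous elements of $\cemm^*$ lands in $J^*$ at a negative degree, hence in $\CK^*$, and Theorem~\ref{thm:nilJ} supplies a uniform nilpotence degree $N$ for $\CK^*$, giving the bound $Nm_0$. That part is correct and matches the paper's (two-sentence) proof exactly.

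Your patch for decomposable $M$, however, has a genuine gap. You correctly observe that Theorem~\ref{thm:nilJ} is stated for indecomposable $M$, and you note that the Jacobson radical of the finite-dimensional algebra $\hcem^0$ is nilpotent even without locality; but you never verify the key point that the degree-zero pigeonhole factors $\vartheta_j$ actually lie in that radical, and this is precisely where locality is used in an essential way. In the proof of Theorem~\ref{thm:nilJ}, $\vartheta_j$ factors through some $\Omega^t(M)\not\cong M$, hence is not an isomorphism, and a non-isomorphism in the \emph{local} ring $\hcem^0 = \Homul_{kG}(M,M)$ is automatically in the radical. For decomposable $M$ that implication breaks, and in fact the conclusion you are trying to prove is false as stated. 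If $M = M_1 \oplus M_2$ with $M_1$ an indecomposable nonprojective periodic module and $M_2$ indecomposable and nonperiodic, then $M$ itself is nonperiodic, yet the inverse $\eta^{-1} \in \wExt^{-d}_{kG}(M_1,M_1)$ of a periodicity isomorphism, embedded into $\hcem^{-d}$ via the idempotent $e_1$ for the summand $M_1$, satisfies $(e_1\eta^{-1}e_1)^n = e_1\eta^{-n}e_1 \neq 0$ for every $n$, so $\cemm^*$ is not nilpotent. The hypothesis actually needed is that no indecomposable summand of $M$ is periodic and nonprojective (or simply that $M$ is indecomposable, as in Theorem~\ref{thm:nilJ}); your Krull--Schmidt reduction must rule out periodic $M_i$ entirely, not merely produce one nonperiodic $M_i$.
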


\begin{proof}
By Theorem \ref{thm:bound}, the product of any sufficiently large number
of elements on $\cemm^*$ lies in $J^*$ and in negative degrees. Hence, the 
proof follows from Theorem \ref{thm:nilJ}.
\end{proof}


\section{The periodic streak} \label{sec:streak}
In the case that $M = k$, there are examples of groups where multiplication 
by the element $\zeta$ as in Proposition \ref{prop:inject-z} is also 
surjective on $J^*/I^*$. These include the case that
$G$ is a semidihedral $2$-group (see Section 4 of \cite{BC}). However, we 
see no reason for $\zeta$ to be an isomorphism on $J^*/I^*$ in 
general. 

In this section, we show that with a modification $\FJ^*$ 
of the ideal $J^*$, there is a periodic streak 
$\FJ^*/I^*$ that runs through the entire cohomology ring. 
Moreover, if $M$ is not a periodic module, then 
the ideal $\FJ^*$ is nilpotent also in positive degrees. 
In the next section we show that this streak may be nonzero even for 
modules over elementary abelian $p$-groups. First we give the definition. 
For the fixed $kG$-module $M$, let $I^*$ and $J^*$ be as in Definitions 
\ref{def:I} and \ref{def:J} for the complete cohomology ring $\hcem^*$.

\begin{defi} \label{def:FJ}
For any integers $t$ and $n$, let 
\[
\FJ_t^n  = I^n + \hcem^n \cap (\hcem^* \sum_{m \leq t} \hcem^m).
\]
That is, $\FJ_t^*$ is the $\hcem^*$-submodule (left ideal) of $\hcem^*$
that is the sum of $I^*$ and the submodule generated by elements in 
degrees at most $t$. Let $\FJ^n = \cap_{t<0} \FJ^n_t$. 
\end{defi}

Then we have the following. 

\begin{lemma} \label{lem:basicFJ}
For any $n$, $\FJ^n \subseteq J^n$. In addition, $\FJ^*$ is a
left ideal. 
\end{lemma}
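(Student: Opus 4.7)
The plan is to handle the two assertions separately: the left-ideal claim is essentially formal, while $\FJ^n\subseteq J^n$ is the substantive part and will follow by combining the $\hhg^*$-centrality of $\hcem^*$ with the bottom-up bound of Theorem~\ref{thm:bound}.

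For the left-ideal property, I would take $\alpha\in \hcem^s$ and $x\in \FJ^n$, and for each $t<0$ use a decomposition $x=u_t+y_t$ with $u_t\in I^n$ and $y_t\in \hcem^n\cap \hcem^*\sum_{r\leq t}\hcem^{r}$. The pieces of $\alpha x=\alpha u_t+\alpha y_t$ land in $I^{n+s}$ (since $I^*$ is a two-sided ideal by Proposition~\ref{prop:Iisideal}) and in $\hcem^{n+s}\cap \hcem^*\sum_{r\leq t}\hcem^{r}$ respectively. This shows $\alpha x\in \FJ^{n+s}_t$ for every $t<0$, and intersecting over $t$ finishes this half.

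For $\FJ^n\subseteq J^n$, I would exploit Theorem~\ref{thm:bound} at a sufficiently low threshold. Choose $t<-d\ell$ and write $x=u_t+\sum_j \alpha_j\beta_j$ with $\deg(\beta_j)\leq t$. By Theorem~\ref{thm:bound}, each $\beta_j\in J^{\deg(\beta_j)}$, so its annihilator $\Ann(\beta_j)\subseteq \hhg^*$ has quotient of Krull dimension at most one. Let $A=\bigcap_j \Ann(\beta_j)$; being a finite intersection, $\hhg^*/A$ embeds in the product $\prod_j \hhg^*/\Ann(\beta_j)$ and still has Krull dimension at most one. Since $u_t\in I^n$, its annihilator $B$ has finite codimension in $\hhg^*$. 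Using the graded centrality of $\hhg^*$ inside $\hcem^*$ (the same fact invoked in the proof of Proposition~\ref{prop:Iisideal}), any $\zeta\in A\cap B$ satisfies $\zeta x=\zeta u_t+\sum_j \pm\alpha_j(\zeta\beta_j)=0$. Hence $A\cap B\subseteq \Ann(x)$ and $\hhg^*/\Ann(x)$ has Krull dimension at most one, so $x\in J^n$.

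The step I anticipate being most delicate is the commutation $\zeta(\alpha_j\beta_j)=\pm\alpha_j(\zeta\beta_j)$: the rest of the argument is formal manipulation of decompositions and annihilators, but this commutation must be justified in $\hcem^*$ (not merely in $\cemp^*$). Fortunately the paper already invokes it in the proof of Proposition~\ref{prop:Iisideal}, so only a sentence of acknowledgement is required. A minor bookkeeping point is that the sum $\sum_j \alpha_j\beta_j$ is finite, which is exactly what makes the intersection $A$ still of Krull dimension at most one.
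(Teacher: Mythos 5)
Your proof is correct, and it follows the same basic route as the paper's: both halves hinge on choosing $t$ so low that Theorem~\ref{thm:bound} gives $\hcem^m = J^m$ for all $m\le t$, and on the fact that $\FJ^*$ is an intersection of the left ideals $\FJ_t^*$. For the left-ideal claim you are simply unfolding the paper's one-liner ``each $\FJ_t^*$ is a left ideal, hence so is their intersection.'' For $\FJ^n\subseteq J^n$ the paper is even shorter: after picking $t$ that low, $\FJ^n\subseteq\FJ_t^n\subseteq J^n$ follows immediately because $J^*$ is already known to be a two-sided ideal containing $I^*$ (Proposition~\ref{prop:Jisideal} together with Propositions~\ref{prop:Iisideal}/\ref{prop:Jisideal}). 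You instead re-derive the needed closure by hand through annihilators. That is sound, and arguably more self-contained, but one step as written is imprecise: Krull dimension is \emph{not} in general monotone under ring embeddings, so ``$\hhg^*/A$ embeds in $\prod_j\hhg^*/\Ann(\beta_j)$'' does not by itself bound $\dim(\hhg^*/A)$. The fix is easy and you essentially have it in hand: either note that the embedding is module-finite (the target is a finitely generated $\hhg^*$-module, hence a finite extension of $\hhg^*/A$, and finite extensions preserve dimension), or argue directly on supports, $V(A)=V\bigl(\bigcap_j\Ann(\beta_j)\bigr)=\bigcup_j V(\Ann(\beta_j))$, so $\dim V(A)=\max_j\dim V(\Ann(\beta_j))\le 1$. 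With that patch the argument is complete; the net effect is a longer but equivalent derivation of the inclusion that the paper obtains by quoting Proposition~\ref{prop:Jisideal}.
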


\begin{proof}
Choose $t$ sufficiently large negatively that $J^s = \hcem^s$
for all $s \leq t$. Such a $t$ exists by Theorem \ref{thm:bound}. 
Then $\FJ^n \subseteq \FJ_t^n \subseteq J^n$. This proves the 
first statement. For the second, note that $\FJ_t^*$ is a left ideal
for any $t$. So $\FJ^*$, which is the intersection, is also a left ideal. 
\end{proof}

\begin{thm}
Assume that $M$ is neither periodic nor projective. 
Let $\zeta \in \hhg^s$ be as in Proposition \ref{prop:inject-z}. 
Then, multiplication by $\zeta$ is an isomorphism on $\FJ^*/I^*$.  
In addition, $\FJ^*$ is a nilpotent two-sided ideal.  
\end{thm}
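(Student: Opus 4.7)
The plan is to establish separately that $\zeta$ acts as an isomorphism on $\FJ^*/I^*$ and that $\FJ^*$ is a nilpotent two-sided ideal, using a dimension stabilization argument for the former and Theorems \ref{thm:nilJ} and \ref{thm:nilpotentI} for the latter. Injectivity of $\zeta$ on $\FJ^*/I^*$ is immediate from Proposition \ref{prop:inject-z} together with Lemma \ref{lem:basicFJ}. To upgrade the left-ideal property of $\FJ^*$ to a two-sided one I argue from the filtration: for $m \in \FJ^n$, $\eta \in \hcem^r$, and any target $t' < 0$, I choose $t$ with $t + r \leq t'$ and use a decomposition $m = i_t + \sum_j \alpha_j \beta_j \in \FJ_t^n$ to rewrite $m\eta = i_t\eta + \sum_j \alpha_j(\beta_j \eta)$, whose right factors now sit in degrees $\leq t'$; intersecting over all $t' < 0$ places $m\eta$ in $\FJ^{n+r}$.

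The key preliminary observation for surjectivity, drawn from the corollary to Proposition \ref{prop:inject-z} and the uniform bound in Definition \ref{def:J}, is that $\dim(J^n/I^n)$ is weakly increasing in $n$ with step $s = \deg \zeta$ and uniformly bounded; hence it must stabilize at $-\infty$, so there exists $n_0$ with $\zeta: J^n/I^n \to J^{n+s}/I^{n+s}$ an isomorphism whenever $n \leq n_0$. Given $[m] \in \FJ^n/I^n$, I then pick $t \leq n_0$ and use the decomposition $m = i_t + \sum_j \alpha_{t,j} \beta_{t,j}$ with $\deg \beta_{t,j} \leq t$; each $\beta_{t,j}$ lies in the stable range, so $\beta_{t,j} \equiv \zeta \beta'_{t,j} \pmod{I}$, and centrality of $\zeta$ yields $\zeta m'_t \equiv m \pmod{I^n}$ for $m'_t = \sum_j \alpha_{t,j} \beta'_{t,j}$. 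Injectivity forces $[m'_t]$ to be independent of $t$; taking $t$ even more negative for each prescribed depth $t' < 0$ places the common class in $\FJ_{t'}^{n-s}/I^{n-s}$, and intersecting over all $t'$ exhibits it as a class in $\FJ^{n-s}/I^{n-s}$. I expect this last interleaving---ensuring the lift lands in $\bigcap_t \FJ_t^*$ rather than in a single $\FJ_t^*$---to be the main technical obstacle.

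For the nilpotence claim I iterate the surjectivity step: for each $k \geq 0$ and each homogeneous $\gamma_i \in \FJ^*$ there exists $\gamma_i^{(k)} \in \FJ^{\deg \gamma_i - ks}$ with $\gamma_i \equiv \zeta^k \gamma_i^{(k)} \pmod{I^*}$. Choosing $k$ so large that every $\deg \gamma_i^{(k)} < 0$, centrality of $\zeta$ and the ideal property of $I^*$ yield
\[
\gamma_1 \cdots \gamma_{N_1} \equiv \zeta^{N_1 k}\, \gamma_1^{(k)} \cdots \gamma_{N_1}^{(k)} \pmod{I^*},
\]
where $N_1$ is the nilpotence degree of $\CK^* = \sum_{n<0} J^n$ from Theorem \ref{thm:nilJ}. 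The right-hand product lies in $\CK^*$ and so vanishes; hence any $N_1$-fold product of $\FJ^*$-elements lies in $I^*$, and grouping $N_2$ such blocks (with $N_2$ the nilpotence degree of $I^*$ from Theorem \ref{thm:nilpotentI}) annihilates the total product, proving nilpotence of $\FJ^*$.
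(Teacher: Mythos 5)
Your proof is essentially correct and follows the same broad outline as the paper's (injectivity from Proposition~\ref{prop:inject-z}, dimension stabilization for surjectivity, then $\zeta$-division to reduce nilpotence to $I^*$ and negative-degree nilpotence), but with one genuinely different ingredient: you prove that $\FJ^*$ is a two-sided ideal directly from the filtration $\FJ^* = \bigcap_t \FJ_t^*$, pushing $\beta_j\eta$ into degree $\leq t'$ by choosing $t \leq t' - r$, \emph{without} invoking surjectivity of $\zeta$. The paper instead first proves surjectivity, writes $\gamma = \mu + \zeta^m\nu$ with $\nu$ arbitrarily negative, and only then gets $\gamma\sigma \in \FJ^*$ by pushing $\nu\sigma$ below the stabilization threshold. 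Your route is slightly more economical and decouples the two claims; the paper's route is shorter on the page because the surjectivity lemma has already been established. Your surjectivity argument itself is equivalent to the paper's: where the paper observes that for $t$ sufficiently negative one has $\FJ^* = \FJ_t^*$ (collapsing the intersection), you carry the intersection along and resolve it with an interleaving argument; the resolution you anticipated as the "main technical obstacle" is in fact routine, precisely because $I^* \subseteq \FJ_{t'}^*$ for every $t'$, so any two representatives of the same class mod $I^*$ lie in the same $\FJ_{t'}^*$'s.

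Two small points to tighten. First, when you write "$\beta_{t,j}$ lies in the stable range, so $\beta_{t,j} \equiv \zeta\beta'_{t,j} \pmod I$," you need $\beta_{t,j} \in J^{r_j}$ before you can apply surjectivity of $\zeta$ on $J^*/I^*$; this holds because $\hcem^{r_j} = J^{r_j}$ for $r_j$ sufficiently negative by Theorem~\ref{thm:bound}, so choose $t \leq \min(n_0, -d\ell - 1)$ rather than merely $t \leq n_0$. Second, the uniform bound on $\dim(J^n/I^n)$ is not literally contained in Definition~\ref{def:J}; it comes from the discussion following Theorem~\ref{thm:bound}: $\CJ^*/\CI^*$ is a finitely generated module over $\hhg^*/\Ann_b$, which has Krull dimension one, so its Hilbert function is bounded. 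Finally, your use of Theorem~\ref{thm:nilJ} in the nilpotence step is equivalent to the paper's appeal to Theorem~\ref{thm:negcoho}, since your $\gamma_i^{(k)}$ land in $\FJ^* \subseteq J^*$ in negative degrees, hence in $\CK^*$. None of these affect the validity of the argument.
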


\begin{proof} 
From the lemma we see that multiplication by $\zeta$ is injective on
$\FJ^*/I^* \subseteq J^*/I^*$. For any $n$, the dimension of 
$J^n/I^n$ is smaller than that of $J^{n+s}/I^{n+s}$, where $s$ is 
the degree of $\zeta$. Hence for some $t$ sufficiently large, negatively,
it must be that $\Dim(J^n/I^n) = \Dim(J^{n-s}/I^{n-s})$ for all 
$n< t$. Hence, if $n < t$, then multiplication by $\zeta$ is an 
isomorphism $J^{n-s}/I^{n-s} \to J^n/I^n$. The implication is that
for $n < t$, $\FJ^n = J^n$, and moreover, $\FJ^* = \FJ_t^*$. 

So suppose that $\gamma$ is in $\FJ^n$ for some $n$. Then by the 
definition, $\gamma = \mu + \nu$ where $\mu$ is in $I^n$ and 
$\nu$ is a finite sum of elements of the form $\sigma\tau$ for 
$\sigma \in \hcem^{n-r}$ and $\tau \in \FJ_t^r$ for some $r < t$. 
However, by the above argument, $\tau = \mu^\prime + \zeta\tau^\prime$
for $\mu^\prime \in I^r$ and $\tau^\prime$ in $\FJ^{r-s}$. Consequently, 
the class of $\gamma$ modulo $I^*$ is the same as the class of 
$\sigma\zeta\tau^\prime$ which is the same as that of 
$\zeta(\sigma\tau^\prime)$. So the class of $\gamma$ in   
$\FJ^*/I^*$ is a multiple of $\zeta$ and multiplication by
$\zeta$ is surjective on $\FJ^*/I^*$.

Suppose that $\gamma \in \FJ^n$ and $\sigma \in \hcem^t$ for some $n$ 
and $t$. Then for any $m >0$, there exists $\gamma = \mu +\zeta^m \nu$
with $\mu \in I^n$ and $\nu \in \FJ^{n-sm}$ by what we have just proved. 
Then $\gamma\sigma = \mu\sigma + \zeta^m(\nu\sigma)$, where 
$\nu\sigma \in J^{n-sm+t}$. Assuming that $m$ is sufficiently large, 
$J^{n-sm+t} = \FJ^{n-sm+t}$ and $\gamma\sigma \in \FJ^*$. Hence, 
$\FJ^*$ is a two-sided ideal. 

The nilpotence of $\FJ^*$ is evident. For suppose that 
$x_1, \dots, x_m$ are elements of $\FJ^*$. Write each as 
$x_i = \mu_i + \zeta^{n_i}y_i$ for $\mu_i \in I^*$ and $n_i$ large 
enough that $y_i \in \FJ^*$ has negative degree. Then the product
$x_1\cdots x_m$ equal to $\zeta^a (y_i \cdots y_m)$ modulo $I^*$,
where $a = \sum n_i$. So, by Theorem \ref{thm:negcoho},  
if $m$ is large enough the product will
lie in $I^*$ which we know to be nilpotent. 
\end{proof}


\section{Examples} \label{sec:examples}
The aim of this section is to  show by examples 
that the ideal $\FJ^*$ is not trivial, {\it i. e.}
not equal to $I^*$, even in some very simple cases. 
Two examples are presented, one in characteristic 2 
and another in odd characteristics. The second is presented with
somewhat less detail. 

\subsection{A characteristic 2 example.} \label{ex:even}
Suppose that $G = \langle x, y, z \rangle$ is an elementary 
abelian group of order $8$, and let $k$ be a field of characteristic $2$.
Then $kG \cong k[X,Y,Z]/(X^2, Y^2, Z^2)$ 
where $X = x-1$, $Y = y-1$ and $Z = z-1$. A $k\langle x \rangle$-projective
resolution of $k$ has the form 
\[
\xymatrix{
\dots \ar[r] & P_2^X \ar[r]^{X} & P_1^X \ar[r]^{X} & 
P_0^X \ar[r]^{\varepsilon} & k \ar[r] & 0
}
\]
where for every $j \geq 0$, $P_j^X \cong k\langle x \rangle \cong k[X]/(X^2)$,
and the boundary maps $P_{j+1}^X \to P_j^X$ are multiplication
by $X$. Substituting $Y$ for $X$ and $y$ for $x$, we get a projective 
$k\langle y \rangle$-resolution $P_*^Y$ of $k$. Then a projective 
$kG$-resolution of $k$ is the tensor product 
$P_* = P_*^X \otimes P_*^Y \otimes P_*^Z$ since $kG \cong k\langle x \rangle
\otimes k\langle y \rangle \otimes k\langle z \rangle$. 

To obtain a complete resolution we take the $k$-dual of $P_*$, shift by 
one degree and splice. The dual of $P_*^X$ has the form 
\[
\xymatrix{
0 \ar[r] & k \ar[r]^{\varepsilon^\sharp} & P_{-1}^X \ar[r]^{X} & 
P_{-2}^X \ar[r]^{X} & P_{-3}^X \ar[r] & \dots
}
\]
Here, since $k\langle x \rangle$ is self-dual, we have that 
$P^X_{-j-1} = (P^X_j)^* $ for all $j \geq 0$ and the boundary maps are the 
duals of the maps in the projective resolution. 
Thus, a complete projective $kG$-resolution of $k$ has the form
\[
\xymatrix{
\dots \ar[r] & P_2 \ar[r] & P_1 \ar[r] & 
P_0 \ar[r]^{XYZ} & P_{-1} \ar[r] & P_{-2} \ar[r] & \dots
}
\]
where the boundary map $P_0 \to P_{-1}$ is the splice
$\varepsilon^\sharp\varepsilon:P_0^X \otimes P_0^Y \otimes P_0^Z \to 
P_{-1}^X \otimes P_{-1}^Y \otimes P_{-1}^Z$, taking $1 \otimes 1 \otimes 1$
to $X\otimes Y \otimes Z$. 

We define the module $M$ as follows. Let $F = kGu \oplus kGv$ be the free
$kG$-module with free basis $\{u, v\}$. Then $M = F/L$ where 
$L = \langle Xu, ZYu, Yu-Xv \rangle$. Then $M$ has a basis consisting of 
the classes modulo $L$ of the elements of the set $B = \{Zu, u, Yu, v, Zv, 
Yv, ZYv\}$. A diagram for the module looks like 
\[
\xymatrix{
& u \ar[dl]_Z \ar[dr]^Y && v \ar[dl]_X \ar[d]^Z \ar[dr]^Y \\
Zu && Yu=Xv & Zv \ar[dr]^Y & Yv \ar[d]^Z \\
&&&& YZv 
}
\]
where the vertices are a $k$-basis and the arrows indicate multiplications
by the designated elements. 

In what follows, we use the isomorphism $\wExt^*_{kG}(M,M) \cong 
\wHH^*(G, \Hom_k(M,M))$. In addition, for convenience of notation, 
we view $kG$ as a truncated polynomial ring in 
variables $X, Y, Z$. This has the Hopf algebra structure we would 
get by regarding $kG$ as the restricted enveloping algebra of a three
dimensional commutative restricted Lie algebra. In particular, 
for $f \in \Hom_k(M,M)$, $m \in M$ and $U$ any $k$-linear combination
of $X, Y, Z$, we have that $(Uf)(m) = Uf(m)-f(Um)$. We see from 
the results of \cite{CI} that changing the coalgebra structure
makes essentially no difference in the example that we compute.
That is, the action of the subring of $H^*(G,k)$ generated by the 
Bocksteins of the degree-one elements on $\Ext^*_{kG}(M,M)$
is the same regardless of which Hopf algebra structure is chosen. 

For notation, let $u_{a,b,c}$ denote the element $1 \otimes 1\otimes 1$
in $P^X_a \otimes P^Y_b \otimes P_c^Z$. Then for $n\geq 0$, $P_n$ is
generated by all $u_{a,b,c}$ with $a+b+c = n$ and $a, b, c$ all nonnegative. 
for $n < 0$, $P_n$ is generated by all $u_{a,b,c}$ with $a+b+c = n-2$ 
and $a, b, c$ all negative.

Define $f, g \in \Hom_k(M,M) = \End_k(M)$ 
by $f(Zu) = v$ and $g(Zu) = u$ and 
$f(w) = 0 = g(w)$ for $w$ any element in the basis $B$ other than $Zu$.
For $n < -1$, we define a map $\alpha_n: P_{n} \to \End_k(M)$ by 
\[
\alpha_{n}(u_{a, b, c}) = \begin{cases} 
f & \text{ for } (a,b,c) = (n,-1,-1), \\
g & \text{ for } (a,b,c) = (n+1,-2,-1), \\
0 & \text{otherwise}.
\end{cases}
\]
Note that, with $n < -1$,  $\partial(u_{n+1,-1,-1}) = 
Xu_{n,-1,-1} + Yu_{n+1,-2,-1} + Zu_{n+1,-1,-2}$.
So that $\alpha_{n}(\partial(u_{n+1,-1,-1})) = Xf +Yg = 0.$ Thus, with
some additional effort, it can 
be seen that $\alpha_{n}$ is a cocycle. 

For $n \geq -1$, define $\alpha_n: P_n \to \End_k(M)$ as follows:
\[
\alpha_{n}(u_{a, b, c}) = \begin{cases} 
f  & \text{ for } n = -1 \text{ and } (a,b,c) = (-1,-1,-1), \\
YZf & \text{ for } n \geq 0 \text{ and } (a,b,c) = (n,0,0), \\
0 & \text{otherwise}
\end{cases}
\]
The function $YZf$ has values $(YZf)(u) = Yv$, $(YZf)(Zu) = YZv$ and 
$(YZf)(w) = 0$ for $w$ any other element of the basis $B$. Notice 
that $YZf$ is a $kG$-homomorphism. 
The fact that every $\alpha_n$ is a cocycle is a consequence of the next
result.

\begin{prop} \label{prop:ex1}
 Let $\zeta \in \hhg^1$ be the element represented by the 
cocycle $\zeta: P_1(k) \to k$ by $\zeta(u_{1,0,0}) = 1$ and 
$\zeta(u_{0,1,0}) = 0 = \zeta(u_{0,0,1})$. Let $\tilde{\alpha}_n$
denote the cohomology class in $\hcem_n$ of $\alpha_n$.
Then $\tilde{\alpha}_n\zeta = \tilde{\alpha}_{n+1} \neq 0$ for all $n$.
In particular, $\tilde{\alpha}_n \in \FJ^n$ and is not in $I^n$. 
\end{prop}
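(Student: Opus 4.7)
The plan is to establish the three claims (the identity $\tilde\alpha_n\zeta = \tilde\alpha_{n+1}$, the nonvanishing of each $\tilde\alpha_n$, and the membership $\tilde\alpha_n\in\FJ^n\setminus I^n$) by (i) constructing a chain-map lift $\hat\zeta\colon P_*\to P_{*-1}$ of $\zeta$, (ii) verifying at the cochain level that $\alpha_n\circ\hat\zeta = \alpha_{n+1}$, and (iii) showing by a coboundary-obstruction argument that each $\alpha_n$ is not a coboundary. Steps (ii) and (iii) together will imply the membership statement almost formally.

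For (i), I would define $\hat\zeta$ by $\hat\zeta(u_{a,b,c}) = u_{a-1,b,c}$ whenever the target indices are valid for the complete resolution (all nonnegative or all $\leq -1$), set $\hat\zeta(u_{0,b,c}) = 0$ for $(b,c)\neq (0,0)$, and at the splice set $\hat\zeta(u_{0,0,0}) = YZ\cdot u_{-1,-1,-1}$. The multiplier $YZ$ is forced by $\partial(u_{0,0,0}) = XYZ\cdot u_{-1,-1,-1}$ together with $Y^2 = Z^2 = 0$; verifying $\partial\hat\zeta = \hat\zeta\partial$ elsewhere is immediate. For (ii), both sides of $\alpha_n\circ\hat\zeta = \alpha_{n+1}$ are to be compared on the short list of generators where $\alpha_{n+1}$ is nonzero; the only splice case is $n=-1$, where $\alpha_{-1}\hat\zeta(u_{0,0,0}) = YZ\cdot f = \alpha_0(u_{0,0,0})$. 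Verifying that each $\alpha_n$ is itself a cocycle uses the identity $Xf + Yg = 0$ in $\End_k(M)$, which in characteristic $2$ follows from $Yu = Xv$ in $M$ by a direct computation showing both $Xf$ and $Yg$ send $Zu\mapsto Yu$ and kill every other basis element.

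The main obstacle is step (iii). Suppose $\alpha_n = d\beta$ for some cochain $\beta\colon P_{n-1}\to\End_k(M)$. Because the relevant $P_{n-1}$ is generated by only a handful of $u_{a,b,c}$, this condition reduces to a single equation in $\End_k(M)$. For $n\leq -1$ the equation is $f = X\beta_1 + Y\beta_2 + Z\beta_3$; evaluating at $Zu$ gives $v = X\beta_1(Zu) + Y\beta_2(Zu) + Z\beta_3(Zu)\in\Rad M$, contradicting $v\notin\Rad M$. For $n\geq 1$ only the term $X u_{n-1,0,0}$ survives in $\partial_n(u_{n,0,0})$, so the equation becomes $YZf = X\phi$; evaluating at $u$ gives $Yv = X\phi(u)\in XM = \langle Yu\rangle$, a contradiction. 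The trickiest case is $n=0$, where the splice forces $YZf = XYZ\phi$. Here the simple $\Rad M$ argument no longer suffices, and I would instead verify by direct inspection on the seven basis elements that $XY$, $XZ$, and $XYZ$ each annihilate $M$; this forces $(XYZ\phi)(u) = 0$, contradicting $YZf(u) = Yv\neq 0$.

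Once each $\tilde\alpha_n$ is known to be nonzero, the $\FJ^n\setminus I^n$ conclusion is formal. The family $\{\zeta^k\tilde\alpha_n\}_{k\geq 0} = \{\tilde\alpha_{n+k}\}_{k\geq 0}$ consists of nonzero elements in distinct degrees and is therefore linearly independent, making $\hhg^*\tilde\alpha_n$ infinite dimensional and forcing $\tilde\alpha_n\notin I^n$. For $\tilde\alpha_n\in\FJ^n$, given any $t<0$ choose $s\leq \min(n,t)$ and write $\tilde\alpha_n = \zeta^{n-s}\tilde\alpha_s$; this exhibits $\tilde\alpha_n$ in the left ideal generated by $\tilde\alpha_s$, an element of degree $s\leq t$, so $\tilde\alpha_n\in\FJ_t^n$, and intersecting over $t<0$ gives $\tilde\alpha_n\in\FJ^n$.
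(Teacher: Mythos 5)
Your proof is correct and follows the same route as the paper: construct the chain map lifting $\zeta$ (your $\hat\zeta$ is identical to the paper's $\zeta_n$), verify $\alpha_n\circ\hat\zeta=\alpha_{n+1}$ on the cochain level, and show nonvanishing via a coboundary obstruction. The one place you do a bit more work than the paper is the nonvanishing step: the paper only argues $\tilde\alpha_n\neq 0$ for $n>0$ (noting $YZf\notin X\End_k(M)$) and then implicitly bootstraps to all $n$ using $\tilde\alpha_n\zeta^m=\tilde\alpha_{n+m}$, whereas you handle $n\leq -1$, $n=0$, and $n\geq 1$ directly with a radical/annihilator argument in each case; both are valid, and yours makes the $n=0$ splice case explicit, which the paper glosses over. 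Your derivation of $\tilde\alpha_n\in\FJ^n\setminus I^n$ from the nonvanishing and the relation $\tilde\alpha_n=\zeta^{n-s}\tilde\alpha_s$ is the intended formal consequence.
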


\begin{proof}
The trick is to write out the chain map corresponding to $\zeta$. For 
specific $n$ assume that if $n \geq 0$, then $a+b+c =n$  and that 
$a, b, c$ are all nonnegative while if $n < 0$ then 
$a+b+c = n-2$ and $a, b, c$ are all negative. 
Then the chain map for $\zeta$ is 
given by 
\[
\zeta_n(u_{a,b,c}) = \begin{cases}
u_{a-1, b, c} & \text{ if } n > 0 \text{ and } a \neq 0, \\
0 & \text{ if } n > 0 \text{ and } a = 0, \\
YZu_{-1,-1,-1} & \text{ if } n = 0, \\
u_{a-1, b, c} & \text{ if } n < 0. \\
\end{cases}
\]
Note that in degree zero we have that 
\[
\partial_{-1}\zeta_0(u_{0,0,0}) 
= \partial_{-1}(YZu_{-1,-1,-1})) = XYZu_{-2,-1,-1}=
\]
\[ 
XYZ\zeta_{-1}(u_{-1,-1,-1}) =
\zeta_{-1}\partial_{0}(u_{0,0,0}). 
\]
The remaining checks that this 
is a chain map are even easier. 
Verifying that $\tilde{\alpha}_n\zeta = \tilde{\alpha}_{n+1}$ is 
straightforward. 

For $n >0$ notice that $\partial_n(u_{n,0,0}) =
Xu_{n-1,0,0}$. So if $\alpha_n$ is a coboundary, then $YZf =
\alpha_n(u_{n,0,0}) = \beta\partial_n(u_{n,0,0}) = X\beta(u_{n-1,0,0})$
for some $\beta$. However, we can check that this does not happen, since
$YZf$ is not in $X\End_k(M)$. Thus $\tilde{\alpha}_n \neq 0$. 
\end{proof}

\subsection{An odd characteristic example} \label{ex:odd}
In this example we assume that $k$ has prime characteristic $p>2$ and that
$G = \langle x, y \rangle$ is a elementary abelian group of order $p^2$.
So $kG \cong k[X, Y]/(X^p, Y^p)$ is a truncated polynomial ring, 
where $X = x-1$ and $Y = y-1$. Note this time that the projective 
resolution of $k$ as a $k\langle x \rangle$-module has the form 
\[
\xymatrix{
\dots \ar[r] & P_2^{X} \ar[r]^{X^{p-1}} & P_1^X \ar[r]^{X} &
P_0^X \ar[r]^{\varepsilon} & k \ar[r] & 0.
}
\]
That is, the even dimensional boundary maps are multiplication by 
$X^{p-1}$ rather than $X$. 

We use a variant of the notation of the last example. In 
particular, $u_{i,j}$ denotes the element $1 \otimes 1$ generating 
$P_i^X \otimes P^Y_j$. If $n \geq 0$, then $P_n$ is a direct sum of 
all $P_i^X \otimes P^Y_j$ with $i$ and $j$ nonnegative and 
$i+j = n$. If $n < 0$, then $P_n$ is a direct sum of 
all $P_i^X \otimes P^Y_j$ with $i$ and $j$ both negative and 
$i+j = n-1$. 

The module is $M \cong (kGu \oplus kGv)/L$ where $L$ is generated by
$X^2u$, $Yu-Xv$, $XYu$, $Y^2u$. Hence, $M$ has a basis consisting of the 
classes modulo $L$ of the elements in the set $B = \{Xu, u, Yu = Xv, 
v, Yv, \dots, Y^{p-1}v\}$. Let $f: M \to M$ be given by the rule 
that $f(Xu) = v$ and $f(w) = 0$  for $w$ any element of $B$ other 
than $Xu$. Notice that $X^3f = 0$.

Assume first that $p \geq 5$. Define $\alpha_n: P_{2n-1} \to \End_k(M)$ by 
\[
\alpha_n(u_{a,b}) = \begin{cases} 
f & \text{ if } (a,b) = (2n-1,-1) \text{ and } n \leq 0 \\
Y^{p-1} f & \text{ if } (a,b) = (2n-1, 0) \text{ and } n > 0 \\
0 & \text{ otherwise }
\end{cases}
\]

In the case that $p = 3$ some adjustment must be made because of the 
fact that $X^2f \neq 0$. Let $g, h \in \End_k(M)$ be the functions
given by $g(Yu) = Yu$, $h(Y^2v) = Yu$, and $g(w) = 0 = h(t)$ for 
$w,t \in B$, $w \neq Yu$, $t \neq Y^2u$. 
Then, in case $p =3$, let $\alpha_n$ be given by 
\[
\alpha_n(u_{a,b}) = \begin{cases}
f & \text{ if } (a,b) = (2n-1,-1) \text{ and } n \leq 0,  \\
g & \text{ if } (a,b) = (2n,-2) \text{ and } n \leq -1,\\
h & \text{ if } (a,b) = (2n+1,-3) \text{ and } n \leq -1, \\
Y^{p-1} f & \text{ if } (a,b) = (2n-1, 0) \text{ and } n > 0, \\
0 & \text{ otherwise }
\end{cases}
\]
We emphasize that $\alpha_n$ is a cocycle in degree $2n-1$. The result
is the following.

\begin{prop} \label{prop:ex2}
 Let $\zeta \in \hhg^2$ be the element represented by the
cocycle $\zeta: P_2(k) \to k$ by $\zeta(u_{2,0}) = 1$ and
$\zeta(u_{1, 1}) = 0 = \zeta(u_{0,2}).$ 
Let $\tilde{\alpha}_n$
denote the cohomology class in $\hcem_{2n-1}$ of $\alpha_n$.
Then $\tilde{\alpha}_n\zeta = \tilde{\alpha}_{n+1} \neq 0$ for all $n$.
In particular, $\tilde{\alpha}_n \in \FJ^n$ and $\tilde{\alpha_n}$ 
is not in $I^n$.
\end{prop}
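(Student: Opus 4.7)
The plan is to mimic the proof of Proposition~\ref{prop:ex1} in the odd-characteristic setting, adjusting for the alternating pattern of boundaries $X$ and $X^{p-1}$ in the minimal resolution. The main work is to construct a chain map lifting $\zeta$, to verify the multiplicative identity, and to prove nonvanishing; the membership statements then follow formally.

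First I would write down an explicit chain map $\{\zeta_n\colon P_n\to P_{n-2}\}_{n\in\bZ}$ lifting $\zeta$. Away from the splice, the natural formula $\zeta_n(u_{a,b})=u_{a-2,b}$ (for $a\geq 2$ in positive degrees, and for $a\leq -3$ in negative degrees) works directly, because the alternating boundaries line up consistently when the $X$-index is shifted by two. Across the splice at degrees $0$ and $-1$, the chain map must compensate for the socle factor in $\partial_0(u_{0,0})=X^{p-1}Y^{p-1}u_{-1,-1}$; one finds, for example, $\zeta_0(u_{0,0})=Y^{p-1}u_{-2,-1}$ and $\zeta_1(u_{1,0})=Y^{p-1}u_{-1,-1}$. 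With the chain map in hand, the identity $\tilde\alpha_n\zeta=\tilde\alpha_{n+1}$ reduces to computing $\alpha_n\circ\zeta_{2n+1}$ on the handful of generators where $\alpha_n$ is nonzero; for $p=3$, the three components of $\alpha_n$ (using $f$, $g$, $h$) complicate the bookkeeping but not the argument.

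Next I would show $\tilde\alpha_n\neq 0$ by arguing that $\alpha_n$ is not a coboundary. For $n>0$, a coboundary representation would force $Y^{p-1}f\in X\End_k(M)$, which fails because no element of $M$ has $X$-image equal to $Y^{p-1}v$; indeed the only basis elements of $M$ with nonzero $X$-image are $u$ and $v$, giving $Xu$ and $Yu$ respectively. For $n\leq 0$ the analogous obstruction is $f\notin X\End_k(M)+Y\End_k(M)$, which reduces to $v\notin XM+YM$ and follows from the same basis inspection. The characteristic-$3$ case uses the same obstruction applied to the $f$-component.

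The conclusions are then formal. Since $\zeta^k\tilde\alpha_{n-k}=\tilde\alpha_n\neq 0$ for every $k\geq 0$, the class $\tilde\alpha_n$ lies in $\hcem^*\sum_{m\leq t}\hcem^m$ for every $t<0$, so $\tilde\alpha_n\in\FJ^{2n-1}$. Conversely, $\zeta^k\tilde\alpha_n=\tilde\alpha_{n+k}$ is nonzero for every $k\geq 0$, so $\hhg^*\tilde\alpha_n$ is infinite-dimensional and $\tilde\alpha_n\notin I^{2n-1}$. The main technical obstacle will be getting the chain map right across the splice at degrees $0$ and $-1$, together with the extra casework in characteristic~$3$; the rest of the proof proceeds exactly as in the characteristic~$2$ example.
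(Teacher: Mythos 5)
Your proposal matches the paper's approach exactly: the paper also gives the same lifting chain map $\{\zeta_n\}$ (including $\zeta_0(u_{0,0})=Y^{p-1}u_{-2,-1}$ and $\zeta_1(u_{1,0})=Y^{p-1}u_{-1,-1}$), then leaves the verification that it is a chain map, the identity $\tilde\alpha_n\zeta=\tilde\alpha_{n+1}$, and the nonvanishing to the reader, and your obstruction arguments ($Y^{p-1}f\notin X\End_k(M)$ for $n>0$, $f\notin X\End_k(M)+Y\End_k(M)$ for $n\le 0$, applied to the $f$-component in characteristic $3$) are the intended ones. One small slip: in negative degrees the formula $\zeta_n(u_{a,b})=u_{a-2,b}$ holds for all $a\le -1$ (as in the paper), not only $a\le -3$.
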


It can be computed that a chain map 
of $\zeta$ on the complete
resolution of $k$ is given by the following. Here $\zeta_n: P_n \to P_{n-2}$.
For any $n$ and $a,b$, let 
\[
\zeta(u_{a,b}) = \begin{cases}
u_{a-2, b} & \text{ if } n > 1 \text{ and } a >1, \\
0 & \text{ if } n > 0, a = 0, 1 \text{ and } b > 0,  \\
Y^{p-1}u_{-1,-1} & \text{ if } (a,b) = (1,0),\\
Y^{p-1}u_{-2,-1} & \text{ if } n =0 \text{ and } (a,b) = (0,0), \\
u_{a-2, b} & \text{ if } n < 0. \\
\end{cases}
\]
Again, we leave it to the reader to prove that $\{\zeta_n\}$ is a 
chain map and that the proposition holds.

\subsection{One more example} \label{subsec:ex3}
Finally, we present an example of a $kG$-module $M$ and an element of 
$\gamma$ of $\wExt^*_{kG}(M,M)$ in degree $-1$ where $\gamma\hgs$
is more than periodic meaning that $J^{-1} \neq \hcem^{-1}$. 
The group and notation for the cohomology are
the same as in the first example \ref{ex:even}. In particular, $p = 2$
and the projective resolution of $k$ is the tensor product 
$P_* = P_*^X \otimes P_*^Y \otimes P_*^Z$. 

Let $M$ be the quotient module $M = F/L$, where $F = kGu \oplus kGv$ is
the free $kG$-module on generators $u$ and $v$, and $L$ is generated 
by the elements $Zu$, $Yu - Xv$, and $YZv$. It has basis consisting of 
the classes modulo $L$ of the elements $B = \{ Xu, u, Yu, v, Yv, Zv \}$.
Define $f \in \End_k(M)$ by setting $f(Xu) = v$ and $f(w) = 0$ for 
$w$ any other element of $B$. Let $\gamma \in \hcem^{-1}$ be the element
represented by the cocycle $\gamma(u_{-1,-1,-1}) = f$.

Let $\zeta_Y \in \HHH^1(G,k)$ be represented by the cocyle 
$\zeta_Y: P_1 \to k$,  
such that $\zeta_Y(u_{0,1,0}) = 1$, $\zeta_Y(u_{1,0,0}) = 0 = 
\zeta_Y(u_{0,0,1})$. Let $\zeta_Z$ be given by $\zeta_Z(u_{0,0,1}) = 1$, 
$\zeta_Z(u_{1,0,0}) = 0 = \zeta_Z(u_{0,1.0})$.
Notice that $\gamma\zeta_Y(u_{0,0,0}) = XZf = g_1$ and 
$\gamma\zeta_Z(u_{0,0,0})  = XYf = g_2$, where $g_1(u) = Zv$, 
$g_2(u) = Yv$ and $g_1(w) = 0 = g_2(w)$ for $w \in B$, $w \neq u$. 
Note that $g_1$ and $g_2$ are $kG$-homomorphisms. 

We can prove the following.

\begin{prop} \label{prop:ex3}
With the given module $M$ and the above notation, we have that 
homomorphism $\hgs \to \hcem^*$ given by multiplication by $\gamma$ 
is injective on the polynomial subring $k[\zeta_Y, \zeta_Z]$. 
In particular, $\gamma \in \hcem^{-1}$, and $\gamma \notin J^{-1}$.
\end{prop}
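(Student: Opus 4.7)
The plan is to derive $\gamma \notin J^{-1}$ from the injectivity statement, then prove injectivity by computing explicit cocycle representatives. If multiplication by $\gamma$ is injective on $k[\zeta_Y,\zeta_Z]$, then $\Ann(\gamma)\cap k[\zeta_Y,\zeta_Z]=0$, so $k[\zeta_Y,\zeta_Z]$ embeds into $\hhg^*/\Ann(\gamma)$, forcing its Krull dimension to be at least two; by Definition~\ref{def:J} this places $\gamma$ outside $J^{-1}$.

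To establish injectivity I would first construct chain-level lifts of $\zeta_Y$ and $\zeta_Z$ on the complete resolution $P_*=P_*^X\otimes P_*^Y\otimes P_*^Z$, in direct analogy with the lift of $\zeta=\zeta_X$ in Proposition~\ref{prop:ex1}. The lift of $\zeta_Y$ decrements the middle index (in positive degrees when $b\ge 1$, and in all negative degrees), with degree-zero twist $(\zeta_Y)_0(u_{0,0,0})=XZ\,u_{-1,-1,-1}$; the lift of $\zeta_Z$ is symmetric, with twist $XY\,u_{-1,-1,-1}$. Verification that these are chain maps is the same diagram-chase as in Proposition~\ref{prop:ex1}.

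Composing these lifts with $\gamma$ yields explicit cocycle representatives of $\gamma\zeta_Y^a\zeta_Z^b$. For $n=a+b\ge 1$, a direct trace through the iteration shows that $\gamma\zeta_Y^a\zeta_Z^b$ is represented by a cocycle $\gamma_{a,b}\colon P_{n-1}\to\Hom_k(M,M)$ supported on a single generator: for $a\ge 1$, $\gamma_{a,b}(u_{0,\,a-1,\,b})=g_1=XZf$; for $a=0$ and $b\ge 1$, $\gamma_{0,b}(u_{0,\,0,\,b-1})=g_2=XYf$, with all other generators killed. Hence the cocycles $\{\gamma_{a,b}\}_{a+b=n}$ are supported on pairwise distinct generators of $P_{n-1}$, except that $(0,n)$ and $(1,n-1)$ both land on $u_{0,0,n-1}$ but take the linearly independent values $g_2$ and $g_1$ in $\Hom_k(M,M)$. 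In particular they are $k$-linearly independent as cochains.

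The main obstacle is to rule out coboundary cancellations among combinations $\sum_{a+b=n}c_{a,b}\gamma_{a,b}$. Such a combination is a coboundary iff it equals $\beta\circ\partial_{n-1}$ for some $\beta\colon P_{n-2}\to\Hom_k(M,M)$, and using $\partial(u_{a',b',c'})=Xu_{a'-1,b',c'}+Yu_{a',b'-1,c'}+Zu_{a',b',c'-1}$ (with terms of negative index set to zero), the condition becomes a linear system on the values $\beta(u_{a'',b'',c''})$. Tracing this system on the distinct support generators of the $\gamma_{a,b}$, combined with the explicit $kG$-action on $\Hom_k(M,M)$ (in particular the observation, parallel to the end of Proposition~\ref{prop:ex1}, that $g_1$ and $g_2$ cannot be written as $X\beta_1+Y\beta_2+Z\beta_3$ for $\beta_i\in\Hom_k(M,M)$ satisfying the full compatibility constraints imposed by the remaining equations), forces each $c_{a,b}$ to vanish. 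This computation is a more elaborate version of the non-coboundary check in Proposition~\ref{prop:ex1} and relies essentially on the specific structure of $M$ described at the beginning of Section~\ref{subsec:ex3}.
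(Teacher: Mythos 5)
Your proposal is correct and follows essentially the same approach as the paper's own (very terse) proof: construct chain--map lifts of $\zeta_Y$ and $\zeta_Z$ on the complete resolution patterned on the lift of $\zeta$ in Proposition~\ref{prop:ex1}, compose with $\gamma$ to obtain explicit cocycle representatives of $\gamma\zeta_Y^a\zeta_Z^b$, and argue that the resulting classes are linearly independent. Both you and the paper defer the final non-coboundary verification to the reader; you are somewhat more explicit about the single-generator support of each $\gamma_{a,b}$ and about the shape of the linear system one must solve to detect coboundaries, while the paper highlights instead the consistency check that the two natural representatives of $\gamma\zeta_Y\zeta_Z$ agree up to a coboundary (where, incidentally, the values of $\alpha\partial$ on $u_{0,1,0}$ and $u_{0,0,1}$ appear to be swapped relative to $g_1$ and $g_2$ as defined).
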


\begin{proof}
We leave most of the details to the reader. 
As in the other examples, one proceeds by finding representing 
chain maps on a complete resolution for the cohomology 
elements $\zeta_Y$ and $\zeta_Z$. This is very similar to the
chain map for the element $\zeta$ in the proof of Proposition 
\ref{prop:ex1}. One important item to notice is 
that if $\alpha: P_0 \to \End_k(M)$ is given by $\alpha(u_{0,0,0})
= Xf$, then $\alpha\partial(u_{0,1,0}) = g_1$ and 
$\alpha\partial(u_{0,0,1}) = g_2$. This means that the 
cocycles $\beta_1$ and $\beta_2$ given by $\beta_1(u_{0,1,0}) = g_1$,
$\beta_2(u_{0,0,1}) = g_2$, $\beta_1(u_{1,0,0}) = \beta_1(u_{0,0,1})
= \beta_2(u_{1,0,0}) = \beta_1(u_{0,1,0}) = 0$, differ by a 
coboundary. These cocycles both represent $\gamma\zeta_Y\zeta_Z$. 
\end{proof}

\end{document}